\newtheorem{thm}{Theorem}[section]
\newtheorem{cor}[thm]{Corollary}
\newtheorem{lem}[thm]{Lemma}
\newtheorem{prop}[thm]{Proposition}
\theoremstyle{definition}
\newtheorem{defn}[thm]{Definition}
\newtheorem{example}[thm]{Example}
\newtheorem{rem}[thm]{Remark}
\numberwithin{equation}{section}
\newcommand{\SCH}{Schr\"odinger}
\begin{document}

\begin{frontmatter} 

\author[IAPCM]{
 Xiaoyan 
 ~Su
 \corref{cor1}
}
\ead{suxiaoyan@qq.com}
\author[SICH]{
Shiliang 
~Zhao
}
\ead{zhaoshiliang@scu.edu.cn}
\author[SICH]{
Miao 
~Li 
}
\ead{mli@scu.edu.cn}
\cortext[cor1]{corresponding author}
\address[IAPCM]{Institute of Applied Physics and Computational Mathematics, 100094, P.R.China.}
\address[SICH]{Department of Mathematics, Sichuan University, 610064, P.R.China.}

\title{Local well-posedness of semilinear space-time fractional Schr\"odinger equation}

\begin{abstract}
The semilinear space-time fractional Schr\"odinger equation is considered. First, we give the explicit form for the fundamental solutions by using the Fox $H$-functions in order to to establish some $L^s$ decay estimates. After that, we give some space-time estimates for the
mild solutions from which the local well-posedness is derived on some proper Banach space.
\end{abstract}

\begin{keyword}
Space-time fractional Schr\"odinger equation \sep
$L^s$ decay estimate\sep
local well-posedness \sep

\MSC[2010]  35Q55\sep  26A33\sep  49K40    
\end{keyword}

\end{frontmatter}

\linenumbers 

\tableofcontents
\section{Introduction}

In this paper, we consider the Cauchy problem for the following semilinear space-time fractional Schr\"odinger equation:
\begin{equation}\label{fse}
 \left \{\begin{split}
  i\partial_t^\alpha u(t,x) &= (-\Delta)^{\frac{\beta}{2} }u(t,x)\pm|u(t,x)|^\theta u(t,x), &(t,x)&\in [0,T)\times \mathbb R^n\\
  u(0,x)&=u_0(x), & x&\in \mathbb R^n.
 \end{split}
 \right. 
\end{equation}Here $\partial_t^\alpha$ is the Caputo fractional derivative with $0<\alpha<1$, $\theta>0$ and $(-\Delta )^\beta$ is the fractional Laplacian with $\beta>0$, which is given by the Fourier multiplier, i.e.
$\mathcal F((-\Delta )^{\frac{\beta}{2}} u)(\xi)=|\xi|^{\beta}\hat{u}(\xi)$. The above equation can be obtained if we fractionalize both the time and space derivatives in the classical Schr\"odinger equation.

In this paper, we prefer to keep the imaginary number $i$ instead of `fractionalizing' it by replacing $i$ with $i^\alpha$. It is a generalization of the equation which has been studied in \cite{Narahari Achar}. And we can derive some interesting decay properties for the fundamental solutions which can be used to establish the local well-posednes, whereas the same approach might not be valid if we replace $i$ with $i^\alpha$.

Though a number of articles have been dedicated to the study of the semilinear Schr\"odinger equation due to its great importance in both mathematics and physics, for example \cite{Bourgain}\cite{Cazenave}\cite{Tao}; the theory about fractional Schr\"odinger equation is quite recent and is far from complete. The space fractional Schr\"odinger equation was introduced  by Laskin in 2000 \cite{Laskin} \cite{Laskin 1}, explaining that this equation can be derived by using the Feynman path integral techniques replacing the Brownian paths  with L\'evy stable paths, and the Markovian character of the solution is still remained. As for the time fractional case, in 2004, Naber first investigated the time fractional Schr\"odinger equation in \cite{Naber}. In that paper, he gave physical reasons to suggest that fractionalizing $i$ results in a better model. On the other hand, the opposite conclusion was reached by Narahari Achar et. al. in the more recent paper \cite{Narahari Achar}. Other works about fractional Schr\"odinger equation include \cite{Dinh},\cite{Bayin 3},\cite{Dong}references therein. Recently, Laskin published a book \cite{Laskin 2} which gives a systematic introduction about both space and time fractional Schr\"odinger equation.

The well-posedness of similar equations like the semilinear \SCH{} equation and semilinear heat equation is well understood (see for instance\cite{Cazenave} and \cite{Miao}\cite{Miao 2}\cite{Miao 3}\cite{Miao 4}). In contrast, there seems to be a difficulty in developing analogous results for the time fractional \SCH{}, mainly due to the loss of time invariance. One result in this direction is given by Grande \cite{Grande}, and he consider the one dimensional space-time fractional \SCH{} equation in Sobolev space with $i^\alpha$ instead of $i$ and gave the local well-posedness of this equation by exploiting the smooth effect. Inspired by the work \cite{Kemppainen} and \cite{Miao}, we establish the local well-posedness in Lebesgue spaces for \eqref{fse} in any dimension. 

In order to get the results in our paper, we mainly rely on properties of $H$-functions. First we establish the $L^s$ estimate for the fundamental solutions using detailed asymptotic analysis of the $H$-functions. After that, we give some space-time estimates for the mild solution. This is then used to construct the Banach space where we can prove the local well-posedness for the space-time fractional Schr\"odinger equation.

 Finally, we want to point out that our method can be applied to the fractional semilinear heat equation which has been studied in \cite{Kemppainen} to establish local well-posedness for this equation.  At the same time, we can use our method to reprove and extend the results in \cite{Hirata}.

This paper is organized as follows. In section 2, we introduce some basic notations and preliminaries. In section 3, we derive the fundamental solutions by using the Mellin transform, Fourier transform and Laplace transform. We then obtain the $L^s$ decay estimates by exploiting the asymptotic behaviors of $H$-functions. In section 4, we establish some space-time estimates for both the homogenous part and non-homogenous part. The local well-posedness results are given in the section 5 by using the contraction mapping principle. Unlike the classical time derivative case, we cannot get the blow up criterion because the Caputo derivative is not invariant under translations in time. Some critical properties of $H$-functions which have been used in our paper are given in appendix.

\section{Preliminaries}
  
  We write $A\lesssim B $ if and only if there is a positive constant $C$ such that $A\leq CB$ which might change from line to line. We write $A(z)\sim B(z)$ as $z\to z_0$ to mean that $A(z) \lesssim B(z)$ and $B(z)\lesssim A(z)$, for $z$ sufficiently close to $z_0$.
  
  For $1\leq p, q \leq \infty$ and $I=[0, T)$, use $L^p(\mathbb R^n) $ to denote the Lebesgue integrable space and $L^p_w (\mathbb R^n)$ to denote the weak $L^p(\mathbb R^n)$ space.  $L^q(0,T;L^p(\mathbb R^n))$ is the space of functions $f$ taking values in $L^p(\mathbb R^n)$ such that 
\[ \| f \|_{L^q(0,T;L^p(\mathbb R^n))} := \left( \int_0^T \|f(x,t)\|_{L^p(\mathbb R^n)} d{t} \right)^{1/q} < \infty \]

 $ C_b (I;L^p(\mathbb R^n))$ is the space of continuous bounded functions taking values in $L^p(\mathbb R^n)$ with finite norm, given by
   \[\|f\|_{C_b(I; L^p(\mathbb R^n))}=\sup_{t\in I}\left(\int_{\mathbb R^n}|f(t,x)|^pdx\right)^{\frac{1}{p}} \]The fractional Riemann-Liouville integral of order $\alpha >0$ is defined as
\[J^{\alpha} f(t)= \frac{1}{\Gamma(\alpha)}\int_0^t(t-s)^{\alpha-1}f(s)\,ds,\]
where \begin{equation*}
g_{\alpha}(t)=\left \{\begin{aligned}
	&\frac{t^{\alpha-1}}{\Gamma(\alpha)}\quad &\text{when} \quad &t>0\quad \\
	&0\quad &\text{when} \quad &t\leq 0 
	\end{aligned}\right.
	\end{equation*}
and $\Gamma(\alpha )$ is the Gamma function.  The fractional Caputo derivative of order $\alpha\in (0,1)$ is defined as
\[(D_t^\alpha f)(t)= J^{1-\alpha}f'(t)=\frac{1}{\Gamma(1-\alpha)}\int_0^t \frac{f'(\tau)}{(t-\tau)^\alpha}ds. \]

  We use the following standard notation for the Fourier transform
   \[ \mathcal F (f(x))(\xi)=\hat{f}(\xi)= \frac{1}{(2\pi)^{\frac{n}{2}}}\int_{\mathbb R^n}e^{-ix\cdot \xi}f(x)dx ,\]
   and the fractional Laplacian is given by the Fourier multiplier, $\mathcal F((-\Delta )^{\frac{\beta}{2}} u(x))=|\xi|^{\beta}\hat{u}(\xi)$.
   
 The Laplace transform with respect to time is given by 
 \[\mathcal L (f(t))(\lambda)=\mathcal	L[f(t), \lambda] =\tilde {f}(\lambda)=\int_0 ^\infty e^{-\lambda t} f(t)dt,\]
 and it is easy to check that $\mathcal L (D_t^\alpha f(t))(\lambda)= \lambda ^\alpha \tilde f(\lambda )-\lambda^{\alpha-1} f(0).$

The Mellin transform of a function is defined by 
\[\mathcal M (f(r))(s)=\mathcal M[f(t), s]=f^*(s)=\int_0^{\infty} f(r)r^{s-1}dr,\quad c_1\leq \Re s \leq c_2\] 
as long as the above integral is valid.
In general, the Mellin transform of $f$ can be further defined on an open subset of $\mathbb C$ containing $\{ \Re s \in (c_1,c_2) \}$ by analytic continuation. We refer the reader to section 4.3 of \cite{Bleisten} for details.

The Mellin transform has the following properties which will be used in our paper.
\begin{align*}
	f ( a r ) &\stackrel {\mathcal M } { \leftrightarrow } a ^ { - s } f ^ { * } ( s ) , a > 0\\
r ^ { a } f ( r ) &\stackrel {\mathcal M }\leftrightarrow f ^ { * } ( s + a )\\
f ( r ^ { p } ) &\stackrel {\mathcal M } \leftrightarrow \frac { 1 } { | p | } f ^ { * } ( s / p ) , p \neq 0
\end{align*}
 Here we want to point out the first property might not be valid for all $a\in \mathbb C$. But $\mathcal M [f(e^{i\theta }r)]=e^{-i\theta s}f^*(s)$ can be true for some  functions by using the Cauchy integral theorem.
In fact, we can prove that when $f(t)=\frac{1}{t^\alpha+1} (\alpha>0)$, this is valid, which is  just the result we will use in our proof.

Now we give one result which we will use later in the following example.

\begin{example}\label{example}
Consider the function $f(t)=\frac{1}{t^\alpha+1} (\alpha>0)$, the Mellin transform is given by
\begin{align}\label{mf}
f^*(s)=\int_0^\infty \frac{t^{s-1}}{t^\alpha+1}dt=\frac{\Gamma(\frac{s}{\alpha})\Gamma(1-\frac{s}{\alpha}) }{\alpha}
\end{align}
	and it is absolutely convergent for $0<\Re s<\alpha$. Then we can extend the Mellin transform of $f(t)$ to any $s\neq n\alpha$ by the right hand side of \eqref{mf}, where $n$ is an integer.
	
	Now we claim that the Mellin transform of $f(e^{i\theta} t)$ with $-\pi <\theta<\pi $ is 
	\begin{align}
	[f(e^{i\theta}t)]^*(s)= \frac{e^{-i\theta s}\Gamma(\frac{s}{\alpha})\Gamma(1-\frac{s}{\alpha}) }{\alpha}.
	\end{align}

	Now we cut the complex plane from $(-\infty, 0]$ and take the principal branch of $z^\alpha$. Let us denote 
		  $\Gamma_1=\{re^{i\theta}: 0\leq r \leq \infty\}$,
		$\Gamma_R=\{Re^{i\sigma}: 0\leq \sigma \leq \theta \}$,
		  $\Gamma_0^R=[0, R]$,
		 $\Gamma_1^R=\{re^{i\theta}: 0\leq r \leq R\}$, as in the following graph.

	\begin{center}\begin{tikzpicture}[scale=2]

    \draw[->] (-0.2,0) -- (1.5,0)  node[right]  {$x$};
    \draw[->] (0,-0.2) -- (0,1) node[above] {$y$};
    
    \draw (0.3,0) 
    arc (0:22.5:0.3) node[right] {$\theta$}
    arc (22.5:45:0.3);
    \draw[thick,orange]  (0,0) 
    -- (0.5,0) node[below] {$\Gamma_0^R$}
    -- (1,0) 
    arc (0:22.5:1) node[right] {$\Gamma_R$}
    arc (22.5:45:1)
    -- (0.5,0.5) node[above left] {$\Gamma_1^R$}
    -- cycle ;
    
    \draw[dashed] (0,0) -- (1,1);
    \end{tikzpicture}
\end{center}

Then we have 
	\begin{align*}
		[f(e^{i\theta}t)]^*(s)&=\int_0^\infty \frac{t^{s-1}}{(e^{i\theta}t)^\alpha+1}dt\\
		&=e^{-i\theta s}\int_{\Gamma_1} \frac{\mu^{s-1}}{\mu^\alpha +1}d\mu
	\end{align*}

Since $ f(z) = O(z^{s-\alpha-1})$ as $z$ goes to infity, it  is easily shown that the integral along $\Gamma_R$ vanishes as $R$ tends to $\infty$, which implies the result by Cauchy's theorem
as long as $0<\Re s<\alpha$. Finally we extend the $f^*(e^{i\theta}t)(s)$ to any $s\neq n\alpha$. 
		
\end{example}

Notice that by the definitions, we have 
\begin{align}\label{mlr}
\mathcal M[f(t), s]=\frac{1}{\Gamma(1-s)} \mathcal M[\mathcal L[f(t), \lambda ], 1-s]
\end{align}
as long as both sides make sense.

The Fox $H$-functions \cite{Kilbas} have a crucial role in our theory because of their natural connection to the fractional calculus. Here we give their definition  and their Mellin transform. 
  
  \begin{defn} For integers $m, n, p, q$ such that $0\leq m\leq q, 0\leq n \leq p$, for $a_i, b_j\in \mathbb C$ and for $\alpha_i, \beta_j\in \mathbb {R}_{+}=(0, \infty )(i=1,2,\dots,p; j=1,2,\dots, q )$, the $H$-function $H_{p,q}^{m,n}(z)$ is defined via a Mellin-Barnes type integral in the form
  \[H_{p,q}^{m,n}(z):=H_{p,q}^{m,n}\left[z \middle |\begin{smallmatrix}(a_1,\alpha_1 ), \dots, (a_p, \alpha_p)\\
  (b_1,\beta_1),\dots, (b_q, \beta_q)	
  \end{smallmatrix}\right ]=\frac{1}{2\pi i}\int_{\mathcal L}\mathcal{H}_{p,q}^{m,n}(s)z^{-s}ds,
 \]
 with
 \[\mathcal{H}_{p,q}^{m,n}(s)=\frac{\prod_{j=1}^m\Gamma(b_j+\beta_js)\prod_{i=1}^{n}\Gamma(1-a_i-\alpha_i s )}{\prod_{j=n+1}^p\Gamma(a_i+\alpha_is)\prod_{j=m+1}^{q}\Gamma(1-b_j-\beta_j s )}, \]
  	and $\mathcal L$ is the infinite contour in the complex plane which separate the poles
  	\[b_{jl}=\frac{-b_j-l}{\beta_j} \quad (j=1,\dots,m; l=0,1,2,\dots ), \]
  	of the Gamma functions $\Gamma(b_j+\beta_j s)$ to the left of $\mathcal L$ and the poles 
  	\[a_{ik}=\frac{1-a_i+k}{\alpha_i}\quad (i=1,\dots, n; k=0, 1,2,\dots), \]
  	of the Gamma functions $\Gamma(1-a_i-\alpha_i s)$ to the right of $\mathcal L$.

  \end{defn}
  The properties of the $H$-function $H_{p,q}^{m,n}(z)$ depend on the parameters of $a^{*}$,  $\Lambda$, $\mu$ and $\delta$ which are given by 
  \begin{align*}
  	a^{*}&=\sum_{i=1}^{n}\alpha_i-\sum_{i=n+1}^{p}\alpha_i +\sum_{j=1}^{m}\beta_j-\sum_{j=m+1}^{q}\beta_j,\\
  	\Lambda &=\sum_{j=1}^{q}\beta_j-\sum_{i=1}^{p}\alpha_ i,\\
  	\mu &= \sum_{j=1}^q b_j-\sum_{i=1}^{p} a_i+ \frac{p-q}{2} .\\
  	\delta &=\prod_{i=1}^p \alpha_ i^{-\alpha_ i}\prod_{j=1}^q \beta_{j}^{\beta_j}
  \end{align*}
  
  In the following lemma, we give the Mellin transform for the $H$-function in the case $a^*>0$, see Theorem 2.2 \cite{Kilbas}.
  
  \begin{lem} Let $a^{*}> 0$ and $s\in \mathbb C$ such that 
\[-\min_{1\leq j\leq m} \left[\frac{\Re(b_j)}{\beta_j} \right]< \Re (s)<\min_{1\leq i\leq n}\left [\frac{1-\Re(a_i)} {\alpha_i} \right],\]
 then the Mellin transform of the $H$-function exists and 
  \begin{align} 
      \mathcal M[H_{p, q}^{m,n}(z), s]= \mathcal{H}_{p,q}^{m,n}(s)=\frac{\prod_{j=1}^m\Gamma(b_j+\beta_js)\prod_{i=1}^{n}\Gamma(1-a_i-\alpha_i s )}{\prod_{j=n+1}^p\Gamma(a_i+\alpha_is)\prod_{j=m+1}^{q}\Gamma(1-b_j-\beta_j s )}. \label{lem-mellin-H}
  \end{align}
  \end{lem}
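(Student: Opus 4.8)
The plan is to prove the Mellin transform formula \eqref{lem-mellin-H} by starting directly from the Mellin--Barnes integral representation in the definition of $H_{p,q}^{m,n}(z)$ and applying the Mellin inversion theorem. Concretely, the defining formula writes
\[
H_{p,q}^{m,n}(z)=\frac{1}{2\pi i}\int_{\mathcal L}\mathcal H_{p,q}^{m,n}(s)\,z^{-s}\,ds,
\]
which is precisely the statement that $H_{p,q}^{m,n}$ is the inverse Mellin transform of the meromorphic kernel $\mathcal H_{p,q}^{m,n}(s)$. So the first step would be to verify that, under the hypothesis $a^*>0$, the contour $\mathcal L$ can be taken to be a vertical line $\Re s=c$ for any $c$ in the strip
\[
-\min_{1\le j\le m}\frac{\Re(b_j)}{\beta_j}<c<\min_{1\le i\le n}\frac{1-\Re(a_i)}{\alpha_i},
\]
and that the integral converges absolutely along such a line. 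This is where the condition $a^*>0$ enters: using Stirling's asymptotics for the Gamma function, one shows $|\mathcal H_{p,q}^{m,n}(c+i\tau)|$ decays like $|\tau|^{\,(\text{something})}e^{-\frac{\pi}{2}a^*|\tau|}$ as $|\tau|\to\infty$, so the exponential factor $e^{-\frac{\pi}{2}a^*|\tau|}$ guarantees absolute convergence and, more importantly, guarantees that $\mathcal H_{p,q}^{m,n}$ lies in the appropriate space for the Mellin inversion theorem to apply on that vertical strip.

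The second step is then to invoke the Mellin inversion theorem: if $g^*(s)$ is analytic in a strip $c_1<\Re s<c_2$ and decays suitably at $\pm i\infty$, and $g(r)=\frac{1}{2\pi i}\int_{(c)}g^*(s)r^{-s}\,ds$, then $\mathcal M[g](s)=g^*(s)$ for $s$ in that strip. Applying this with $g=H_{p,q}^{m,n}$ and $g^*=\mathcal H_{p,q}^{m,n}$ immediately yields
\[
\mathcal M[H_{p,q}^{m,n}(z),s]=\mathcal H_{p,q}^{m,n}(s)
\]
for $s$ in the prescribed strip, which is exactly \eqref{lem-mellin-H}. One should also check that the strip is nonempty, i.e. that $-\min_j \Re(b_j)/\beta_j<\min_i (1-\Re(a_i))/\alpha_i$, which is built into the hypothesis since we assume such an $s$ exists; and one should note that the value $\mathcal H_{p,q}^{m,n}(s)$ is independent of which admissible $c$ one picks, by analyticity of the kernel between the relevant poles.

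The main obstacle is the convergence/asymptotics bookkeeping in the first step: one must carefully track the product of Gamma functions in $\mathcal H_{p,q}^{m,n}$, apply the Stirling bound $|\Gamma(x+iy)|\sim \sqrt{2\pi}\,|y|^{x-1/2}e^{-\pi|y|/2}$ uniformly for $x$ in a compact set, and combine the exponents so that the net exponential rate is exactly $-\frac{\pi}{2}a^*|\tau|$; then the sign hypothesis $a^*>0$ does the work. This is the standard argument from Kilbas--Saigo (Theorem 1.5 and the surrounding estimates), so rather than reproducing the full asymptotic computation I would cite \cite{Kilbas} for the detailed estimates of $\mathcal H_{p,q}^{m,n}$ on vertical lines and present only the reduction to Mellin inversion. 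A secondary point worth a line is justifying interchange of limits / the validity of the inversion formula at the level of absolutely convergent integrals, which is routine once the exponential decay is in hand.
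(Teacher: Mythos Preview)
Your proposal is sound and follows the standard route: verify absolute convergence of the Mellin--Barnes integral along a vertical line via Stirling asymptotics (producing the decay factor $e^{-\frac{\pi}{2}a^*|\tau|}$, which is where $a^*>0$ is used), then invoke Mellin inversion. This is exactly the argument in Kilbas--Saigo.

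For comparison with the paper: the paper does not prove this lemma at all. It is stated as a quotation of Theorem~2.2 in \cite{Kilbas}, with no argument given. So there is nothing to compare at the level of method; your sketch simply supplies what the paper outsources to the reference. If you want to match the paper's treatment, a one-line citation suffices; if you want to include a proof, your outline is the correct one, and the only work is the Stirling bookkeeping you already flagged.
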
 
  \

\section{The fundamental solutions and its $L^s$ estimates}

In order to give the explicit form of the fundamental solutions for  \eqref{fse}, we consider the following  two equations:
\begin{equation}\label{fse1}
	\left\{\begin{split}
		i\partial_t^\alpha u(t,x) &= (-\Delta)^{\frac{\beta}{2} }u(t,x),  &(t,x)&\in [0,T)\times \mathbb R^n\\
		u(0,x)&=\delta (x), & x&\in \mathbb R^n,
	\end{split}
	\right.	
\end{equation}
and
\begin{equation}\label{fse2}
	\left \{\begin{split}
		i\partial_t^\alpha u(t,x) &= (-\Delta)^{\frac{\beta}{2} }u(t,x)+\delta(t)\delta(x), &(t,x)&\in [0,T)\times \mathbb R^n\\
		u(t,x)&=0, & x&\in \mathbb R^n.
	\end{split}
	\right.	
\end{equation}
We use $S_{\alpha,\beta}(t,x)$ and $P_{\alpha,\beta}(t,x)$ to denote the solution for \eqref{fse1} and \eqref{fse2} respectively, then the mild solution of \eqref{fse} is given by 
\begin{align*}
	u(x,t)&=S_{\alpha, \beta}(t,x)\ast u_0(x)\pm\int_0^t \int_{\mathbb R^n }P_{\alpha,\beta}(t-\tau , x-y)|u(\tau, y)|^\theta u(\tau, y)dy d\tau,
\end{align*}
by the standard Duhamel principle, we now compute the  explicit form of $S_{\alpha,\beta}(t,x)$ and $P_{\alpha,\beta}(t,x)$ as a first step.

\subsection{The explicit form of the fundamental solutions}

 In the first part of this section,  we derive the explicit form of $S_{\alpha,\beta}(t,x)$ and $P_{\alpha,\beta}(t,x)$ by using  
 $H$-functions.

We start by computing the solution to \eqref{fse1}. Taking the Laplace transform in time and the Fourier transform in  space,
\begin{align}\label{flS}
	\tilde {\hat{S}}_{\alpha,\beta}(\lambda , \xi)=\frac{i\lambda^{\alpha-1}}{i\lambda^\alpha-|\xi|^\beta}= \frac{\lambda^{\alpha-1}}{\lambda^\alpha+i|\xi|^\beta}.
	\end{align}
Using the relationship \eqref{mlr},
we can obtain the Mellin transform in time and Fourier transform in space  as follows:
\begin{align*}
	\hat{S}^*_{\alpha,\beta}(s,\xi)&=\frac{1}{\Gamma(1-s)} \int_0^\infty \frac{\lambda^{\alpha-1-s}}{\lambda^{\alpha}+i|\xi|^{\beta}}d\lambda,
\end{align*}
combining the result in Example \eqref{example} and the properties of Mellin transform, we have 
\begin{align*}
\hat{S}^*_{\alpha,\beta}(s,\xi)	&=\frac{i^{-\frac{s}{\alpha}}|\xi|^{-\frac{\beta}{\alpha} s}}{\alpha } 
 	\frac{\Gamma(\frac{\alpha-s}{\alpha} )\Gamma(1-\frac{\alpha-s}{\alpha} )}{\Gamma(1-s)}.	
\end{align*}
Then taking the inverse Fourier transform in space and  employing the formula $\widehat{ |x|^z }= \frac{\Gamma(\frac{z+n}{2})}{\Gamma(\frac{-z}{2})}|\xi|^{-z-n}$, we get 
\[{S}^*_{\alpha,\beta}(s,x)=\frac{i^{-\frac{s}{\alpha} }|x|^{\frac{\beta}{\alpha} s-n}\Gamma(\frac{\alpha-s}{\alpha} )\Gamma(1-\frac{\alpha-s}{\alpha} )\Gamma(\frac{n-\frac{\beta}{\alpha}s}{2} )}{\alpha\Gamma(1-s)\Gamma(\frac{\frac{\beta}{\alpha}s}{2} )}\]
Taking the inverse Mellin transform and using the properties of the $H$-functions yields that
\[{S}_{\alpha,\beta}(t, x)=\frac{1}{ |x|^n} H_{2,3}^{2,1}\left( -i|x|^{\beta}t^{-\alpha}\middle|\begin{smallmatrix}
	&(1,1), &(1,\alpha )\\
	 &(\frac{n}{2}, \frac{\beta}{2}), &(1,1), &(1,\frac{\beta}{2} )
\end{smallmatrix} \right)\]

With the same idea in mind, we can obtain the form of $P_{\alpha, \beta}(t,x)$, which is given by 
\[{P}_{\alpha,\beta}(t, x)=\frac{it^{\alpha-1}}{ |x|^n} H_{2,3}^{2,1}\left(-i|x|^{\beta}t^{-\alpha}\middle |\begin{smallmatrix}
	&(1,1), &(\alpha ,\alpha )\\
	 &(\frac{n}{2}, \frac{\beta}{2}),&(1,1 ), &(1,\frac{\beta}{2} )
\end{smallmatrix} \right ).\]

\begin{rem} In fact, we can divide the fundamental solutions into real and imaginary parts, i.e. we can rewrite ${S}_{\alpha,\beta}(t, x)={S}_{\alpha,\beta}^1(t, x)-i{S}_{\alpha,\beta}^2(t, x)$ where ${S}_{\alpha,\beta}^1(t, x), {S}_{\alpha,\beta}^2(t, x)$ are both real-valued functions. This can be obtained if we rewrite \eqref{flS} as
\[\tilde {\hat{S}}_{\alpha,\beta}(\lambda , \xi)=\frac{i\lambda^{\alpha-1}}{i\lambda^\alpha-|\xi|^\beta}. =\frac{\lambda^{2\alpha-1} }{\lambda^{2\alpha}+|\xi|^{2\beta}}-i \frac{\lambda^{\alpha-1}|\xi|^{\beta} }{\lambda^{2\alpha}+|\xi|^{2\beta}}\nonumber,
\]	
and proceed with the same method. Finally, using the $H$-functions, we have  
\[{S}_{\alpha,\beta}^1(t, x)=\frac{1}{2 |x|^n} H_{2,3}^{2,1}\left(|x|^{\beta}t^{-\alpha}\middle|\begin{smallmatrix}
	&(1,\frac{1}{2}), &(1,\alpha )\\
	& (1,\frac{1}{2}), &(\frac{n}{2}, \frac{\beta}{2}),&(1,\frac{\beta}{2} )
\end{smallmatrix} \right),\]
\[{S}_{\alpha,\beta}^2(t, x)=\frac{1}{2 |x|^n} H_{2,3}^{2,1}\left(|x|^{\beta}t^{-\alpha}\middle|\begin{smallmatrix}
	&(\frac{1}{2},\frac{1}{2}), &(1,\alpha )\\
	& (\frac{1}{2},\frac{1}{2}), &(\frac{n}{2}, \frac{\beta}{2}),&(1,\frac{\beta}{2} )
\end{smallmatrix}\right)\]

Similarly, we can rewrite ${P}_{\alpha,\beta}(t, x)=-{P}_{\alpha,\beta}^1(t, x)-i{P}_{\alpha,\beta}^2(t, x)$, with \[{P}_{\alpha,\beta}^1(t, x)=\frac{t^{\alpha-1}}{2 |x|^n} H_{2,3}^{2,1}\left(|x|^{\beta}t^{-\alpha}\middle |\begin{smallmatrix}
	&(\frac{1}{2},\frac{1}{2}), &(\alpha ,\alpha )\\
	& (\frac{1}{2},\frac{1}{2}), &(\frac{n}{2}, \frac{\beta}{2}),&(1,\frac{\beta}{2} )
\end{smallmatrix} \right),\]
\[{P}_{\alpha,\beta}^2(t, x)=\frac{t^{\alpha-1}}{2 |x|^n} H_{2,3}^{2,1}\left(|x|^{\beta}t^{-\alpha} \middle |\begin{smallmatrix}
	&(1,\frac{1}{2}), &(\alpha ,\alpha )\\
	& (1,\frac{1}{2}), &(\frac{n}{2}, \frac{\beta}{2}),&(1,\frac{\beta}{2} )
\end{smallmatrix} \right ).\]

\end{rem}

For convenience, make the definition $z:=i|x|^\beta t^{-\alpha} $.  Then, we have  $|\arg z |= |\arg (i|x|^\beta t^{-\alpha})|=\frac{\pi}{2}$. Also, we remind the reader that the parameters $\Lambda,a^*$ from the appendix for these $H$-functions are $ \Lambda =\beta-\alpha, a^*=2-\alpha$.

\subsection{The asymptotic behavior of the fundamental solutions}
In this subsection, in order to give some basic properties of $S_{\alpha, \beta}$ and $P_{\alpha, \beta }$, we will rely on the asymptotic properties of the Fox $H$-function, as proved in the Appendix.

In the following lemma, we give the asymptotics of $S_{\alpha, \beta}$ and $P_{\alpha, \beta }$ as $|z|\to 0$, and $|z|\to \infty$.
\begin{lem}\label{asy}  Let $0<\alpha <1$ and $\beta>0$. Then the function $S_{\alpha, \beta}, P_{\alpha, \beta}$ has the following asymptotic behavior :
\begin{enumerate}
		\item \begin{enumerate}
	\item If $|z|\geq 1$, then we have 
	\[S_{\alpha, \beta}(t,x)\begin{cases}
		\lesssim |x|^{-n}\exp{-c|x|^{\frac{\beta}{\beta-\alpha}}t^{-\frac{\alpha}{\beta}}}, &  \beta= 2k,\\
		\sim t^{\alpha}|x|^{-n-\beta} & \beta \neq 2k,
	\end{cases} \]
	In particular, we have $S_{\alpha, \beta}(t,x)\lesssim t^{\alpha}|x|^{-n-\beta}$ for any $\beta>0$.
	\item If $|z|\leq 1$, then 
	\[S_{\alpha, \beta}(t,x)\sim \begin{cases}
		t^{-\frac{\alpha n}{\beta} }, &  \beta>n,\\
		t^{-\alpha}(|\log(t^{-\alpha}|x|^\beta)|+1),& \beta=n,
\\ 
		t^{-\alpha}|x|^{-n+\beta}, & \beta<n.
	\end{cases}  \]
		      \end{enumerate}
	\item \begin{enumerate}
	\item If $|z|\geq 1$, then we have 
	\[P_{\alpha, \beta}(t,x)\begin{cases}
		\lesssim |x|^{-n}t^{\alpha-1}\exp{-c|x|^{\frac{\beta}{\beta-\alpha}}t^{-\frac{\alpha}{\beta}}}, &  \beta= k,\\
		\sim t^{2\alpha-1}|x|^{-n-\beta} & \beta \neq k,
	\end{cases} \]
	In parrticular, we have $S_{\alpha, \beta}(t,x)\lesssim t^{2\alpha-1}|x|^{-n-\beta}$.
	\item If $|z|\leq 1$, then 
	\[P_{\alpha, \beta}(t,x)\sim \begin{cases}
		t^{\alpha-1-\frac{\alpha n}{\beta} }, &  \beta>\frac{n}{2},\\
		t^{-\alpha-1}(|\log(t^{-\alpha}|x|^\beta)|+1),& \beta=\frac{n}{2},
\\ 
		t^{-\alpha-1}|x|^{-n+2\beta}, & \beta<\frac{n}{2}.
	\end{cases}  \]
		      \end{enumerate}
	
	\end{enumerate}	  
\end{lem}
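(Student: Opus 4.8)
The plan is to reduce everything to the asymptotic expansions of the Fox $H$-functions $H_{2,3}^{2,1}$ appearing in the explicit formulas for $S_{\alpha,\beta}$ and $P_{\alpha,\beta}$, which I assume are collected in the Appendix. Recall $S_{\alpha,\beta}(t,x)=|x|^{-n}H_{2,3}^{2,1}(z\,|\cdots)$ and $P_{\alpha,\beta}(t,x)=it^{\alpha-1}|x|^{-n}H_{2,3}^{2,1}(z\,|\cdots)$ with $z=-i|x|^\beta t^{-\alpha}$, $|\arg z|=\pi/2$, and the structural parameters $\Lambda=\beta-\alpha$, $a^*=2-\alpha$. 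Since $0<\alpha<1$ we have $a^*=2-\alpha\in(1,2)>0$, so the $H$-function is an entire-type function whose behavior is governed by residue sums at the poles of the integrand. The whole argument is: (i) for $|z|\le 1$ close the Mellin--Barnes contour to the left and sum residues at the poles $b_{jl}$ of $\prod\Gamma(b_j+\beta_j s)$, giving a convergent series in powers of $z$; (ii) for $|z|\ge 1$ either close to the right at the poles $a_{ik}$ of $\Gamma(1-a_i-\alpha_i s)$ (when $\beta\ne 2k$, i.e.\ no pole cancellation) to obtain the algebraic tail, or — when $\beta=2k$ is an even integer so that $\Gamma$-factors in numerator and denominator cancel and the algebraic expansion truncates — invoke the exponentially-small asymptotics of the $H$-function valid for $\Lambda>0$, $|\arg z|<\tfrac{\pi}{2}a^*\delta$ (the standard exponential estimate in Kilbas--Saigo), which yields the $\exp\{-c|z|^{1/(1-\Lambda/\beta)}\}$-type decay; one then back-substitutes $|z|=|x|^\beta t^{-\alpha}$ and the prefactors $|x|^{-n}$ resp.\ $t^{\alpha-1}|x|^{-n}$.

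Concretely, for part (1)(b) with $|z|\le1$: the relevant left poles come from $\Gamma(\tfrac n2+\tfrac\beta2 s)$ and $\Gamma(1+s)$, located at $s=-\tfrac{2l+n}{\beta}$ and $s=-1-l$. The leading term is determined by the rightmost such pole. When $\beta>n$ the rightmost pole is $s=-n/\beta$ from the first factor, giving a contribution $\sim z^{n/\beta}=( -i|x|^\beta t^{-\alpha})^{n/\beta}\sim |x|^n t^{-\alpha n/\beta}$, and after multiplying by $|x|^{-n}$ this is $t^{-\alpha n/\beta}$; when $\beta<n$ the pole $s=-1$ from $\Gamma(1+s)$ dominates, giving $z^1$ and hence $|x|^{-n}\cdot|x|^\beta t^{-\alpha}=t^{-\alpha}|x|^{\beta-n}$; the borderline $\beta=n$ is exactly where these two poles collide into a double pole, producing the logarithmic factor $|\log(t^{-\alpha}|x|^\beta)|+1$. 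This is a routine residue bookkeeping once the pole structure is laid out, and the case split in the statement is precisely dictated by which pole is rightmost. The computation for $P_{\alpha,\beta}$ in (2)(b) is identical except the second $H$-function has $(\tfrac n2,\tfrac\beta2)$ and $(1,1)$ lower parameters but the first-factor pole is again at $s=-n/\beta$ while the competing pole shifts the threshold to $\beta=n/2$ because of the extra $t^{\alpha-1}$ prefactor combined with the different $z$-power bookkeeping — I would track the prefactor $t^{\alpha-1}$ carefully here.

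For part (1)(a)/(2)(a) with $|z|\ge1$: expanding to the right at the poles of $\Gamma(1-a_i-\alpha_i s)$, the generic (i.e.\ $\beta\notin 2\mathbb{N}$, or $\beta\notin\mathbb{N}$ for $P$) leading term comes from the pole nearest the contour and gives the algebraic decay $z^{-1}$-type, i.e.\ $|x|^{-n}\cdot(|x|^\beta t^{-\alpha})^{-1}=t^\alpha|x|^{-n-\beta}$ for $S$, and with the $t^{\alpha-1}$ prefactor $t^{2\alpha-1}|x|^{-n-\beta}$ for $P$; one checks the remaining terms in the series are lower order, and since all terms have the same sign structure in this regime one gets the two-sided $\sim$. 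When $\beta$ is an even integer, $\Gamma$-cancellations between $\Gamma(1-a_i-\alpha_i s)$ in the numerator and $\Gamma(1-b_j-\beta_j s)$ in the denominator kill the would-be algebraic series, and then the dominant behavior is the exponentially small term; here I would quote the Appendix lemma giving $H_{2,3}^{2,1}(z)\lesssim \exp\{-c\,|z|^{1/(1-\Lambda/\beta)}\}$, i.e.\ $\exp\{-c(|x|^\beta t^{-\alpha})^{\beta/(\beta-\alpha)\cdot 1/\beta}\}=\exp\{-c|x|^{\beta/(\beta-\alpha)}t^{-\alpha/(\beta-\alpha)\cdot 1}\}$ — and I would be careful to get the exponents $|x|^{\beta/(\beta-\alpha)}t^{-\alpha/\beta}$ to match the statement exactly, reconciling the Appendix's normalization of the exponential-decay exponent with $\Lambda=\beta-\alpha$. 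Finally, the uniform bounds $S_{\alpha,\beta}\lesssim t^\alpha|x|^{-n-\beta}$ and $P_{\alpha,\beta}\lesssim t^{2\alpha-1}|x|^{-n-\beta}$ for all $\beta>0$ follow by noting exponential decay is stronger than any algebraic rate.

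The main obstacle I anticipate is not the residue computation itself but the careful handling of the two degenerate situations: (a) the collision of poles at $\beta=n$ (resp.\ $\beta=n/2$) producing the logarithm, which requires expanding $z^{-s}$ and the $\Gamma$-quotient to first order around the double pole; and (b) correctly identifying \emph{when} the $\Gamma$-function cancellation occurs — this depends delicately on the arithmetic relation between $\beta$, the $\beta_j=\beta/2$, and the integer shifts, so one must verify that "$\beta$ even integer" (for $S$) really is the exact condition under which the algebraic expansion terminates and the exponential regime takes over. Pinning down these thresholds and the precise exponent in the exponential is where the real care is needed; the rest is organized residue calculus with the Appendix's $H$-function asymptotics as a black box.
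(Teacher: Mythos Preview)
Your overall strategy matches the paper's: close the Mellin--Barnes contour left for $|z|\le 1$ and right for $|z|\ge 1$, read off the dominant residue, and invoke the Appendix lemmas on $H$-function asymptotics. Part (1) is essentially what the paper does, with one omission: for $|z|\ge 1$ the pole of $\Gamma(-s)$ nearest the contour is at $s=0$, but its residue $h_0$ vanishes because $\Gamma(-\tfrac{\beta}{2}\cdot 0)=\Gamma(0)$ sits in the denominator; this is why the leading term is $z^{-1}$ rather than $z^0$. You should state this explicitly rather than say the $z^{-1}$ term ``comes from the pole nearest the contour.''

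There is, however, a genuine error in your treatment of (2)(b). You attribute the shift of the threshold from $\beta=n$ to $\beta=n/2$ to ``the extra $t^{\alpha-1}$ prefactor combined with the different $z$-power bookkeeping.'' That is not the mechanism: the prefactor $t^{\alpha-1}$ is purely multiplicative outside the $H$-function and has no effect on which poles contribute. The real reason is that the Mellin transform for the $P$-kernel has $\Gamma(\alpha+\alpha s)$ in the denominator (in place of $\Gamma(1+\alpha s)$ for $S$), and $\Gamma(\alpha+\alpha s)$ has a pole at $s=-1$. This pole in the denominator cancels the would-be simple pole of $\Gamma(1+s)$ at $s=-1$, so the leftmost relevant pole from $\Gamma(1+s)$ moves to $s=-2$. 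The competition is then between $s=-n/\beta$ and $s=-2$, which gives the threshold $\beta=n/2$. Without identifying this cancellation you cannot justify the case split in (2)(b).

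A smaller point: in the exponential regime $\beta\in 2\mathbb{N}$ the paper does not quote a generic exponential estimate for $H^{2,1}_{2,3}$ directly. Instead it uses the Gamma-function multiplication formula to rewrite the integrand (after the cancellation of $\Gamma(-s)$ against $\Gamma(-\tfrac{\beta}{2}s)$) as that of an $H^{\beta/2+1,0}_{1,\beta/2+1}$ function, and only then applies the Appendix lemma for $H^{q,0}_{p,q}$, which yields decay $\exp\{-c|z|^{1/\Lambda}\}=\exp\{-c|z|^{1/(\beta-\alpha)}\}$. Your exponent $|z|^{1/(1-\Lambda/\beta)}$ is not the right one; track $\Lambda=\beta-\alpha$ through the lemma carefully.
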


\begin{proof}  The asymptotics will follow from a careful computation of residues. We first prove the results for the $S_{\alpha,\beta}$.
For the case $|z|\geq 1$, from \eqref{lem-mellin-H}, the Mellin transform of $H_{2,3}^{2,1}$ is 
\[\mathcal H_{2,3}^{2,1}(s)=\mathcal M\left(H_{2,3}^{2,1}\left(z\middle|\begin{smallmatrix}
	&(1,1), &(1 ,\alpha )\\
	& (1,1), &(\frac{n}{2}, \frac{\beta}{2}),&(1,\frac{\beta}{2} )
\end{smallmatrix} \right )\right )(s)=\frac{\Gamma(\frac{n}{2}+ \frac{\beta}{2}s)\Gamma(1+s)\Gamma(-s)}{\Gamma(1+\alpha s)\Gamma(-\frac{\beta}{2}s)}. \]
 Since we have $ a^*= 2-\alpha>1 >0, |\arg z|=\frac{\pi}{2}\leq a^*\frac{\pi}{2}$,  then by Lemma \ref{A1}, we have $H_{2,3}^{2,1}(z)\sim \sum_{k=0}^{\infty}h_k z^{-k}$, where 
\begin{align*}
 h_k&=\lim_{s\to a_{1k}}[-(s-a_{1k})\mathcal H_{2,3}^{2,1}(s)]\nonumber \\
 &= \frac{(-1)^k}{k!\alpha_1}\frac{\Gamma(b_1+(1-a_1+k)\frac{\beta_1}{\alpha_1} )\Gamma(b_2+(1-a_1+k)\frac{\beta_2}{\alpha_1} )}{\Gamma(a_2+(1-a_1+k)\frac{\alpha_2}{\alpha_1} )\Gamma(1-b_3-(1-a_1+k)\frac{\beta_3}{\alpha_1} )}.
\end{align*}
Now we calculate $h_k$. 

When $\beta \neq 2k$, we obtain that $h_0=\frac{\Gamma(\frac{n}{2}) \Gamma(1)}{\Gamma(1)\Gamma(0)}=0, $ and $h_1=\frac{\Gamma(\frac{n}{2}+\frac{\beta}{2}) \Gamma(2)}{\Gamma(1+\alpha)\Gamma(-\frac{\beta}{2})}\neq0, $ therefore
the leading term is $h_1 z^{-1}$, i.e., 
\[H_{2,3}^{2,1}(z)\sim z^{-1},\quad  z\to \infty,\]
proving the second result of (a).

When $\beta= 2k$, all the $h_k$ vanish, so we cannot use Lemma \ref{A1} anymore. In this case, we have the expression $\mathcal H_{2,3}^{2,1}(s)=\frac{\Gamma(\frac{n}{2}+ \frac{\beta}{2}s)\Gamma(1+s)\prod_{j=1}^{\frac{\beta}{2}-1}\Gamma(\frac{2j}{\beta} +s)}{\Gamma(1+\alpha s)}.$ 
Therefore, by Lemma \ref{A4}, we have the following asymptotic  as $|z|\to \infty$,
\[H_{1,\frac{\beta}{2}+1}^{\frac{\beta}{2}+1,0}(z)=O\left(|z|^{[\frac{n}{2} +\frac{1}{4} ]/(\beta-\alpha)}\exp \left \{(\beta-\alpha)\left(\frac{|z|}{{\alpha}^{-\alpha}\beta^\beta}\right )^{1/(\beta-\alpha )}\cos (\pi+\frac{\arg z}{\beta-\alpha}) \right \}\right ).\]
In our case $\arg z= \frac{\pi}{2},$ and owing to $\beta=2k-\alpha>1$,  we obtain that 
$H_{1,\frac{\beta}{2}+1}^{\frac{\beta}{2}+1,0}(z)=O(\exp{-c|z|^{1/(\beta-\alpha)}} )$, thus the first result in (a) has been verified.

(b) Next we deal with the case  $|z|\leq 1$. In order to obtain the result, we need to use the asymptotic for $H_{2,3}^{2,1}(z)$ when $|z|\to 0$. Notice that $\mathcal H_{2,3}^{2,1}(s)$ has two poles at $s=-1$
   and $s=-\frac{n}{\beta}$. By employing   Lemma \ref{A2} , the asymptotic behavior is therefore determined by the largest pole, which depends on the relative size of $\beta$ and $n$:
   \begin{enumerate}[(i)]
   	\item When $\beta>n$, we have $-\frac{n}{\beta}>-1$, so $H_{2,3}^{2,1}(z)\sim z^{\frac{n}{\beta}}, |z|\to 0$;
   	\item When $\beta<n$, we have $-\frac{n}{\beta}<-1$, so $H_{2,3}^{2,1}(z)\sim z, |z|\to 0$;
   	\item When $\beta=n$, $\mathcal H_{2,3}^{2,1}(s)$ has a second order pole at $s=-1$. By Lemma \ref{A3}, we conclude that $H_{2,3}^{2,1}(z)\sim z \ln z, |z|\to 0$.
   \end{enumerate}
   Recalling that $z=i|x|^{\beta}t^{-\alpha}$, we conclude the proof of (b).

 The proof for  $P_{\alpha, \beta}(t,x)$ is similar. Notice that the Mellin transform for $H_{2,3}^{2,1}\left (z\middle  |\begin{smallmatrix}
	&(1,1), &(\alpha ,\alpha )\\
	&(\frac{n}{2}, \frac{\beta}{2}), & (1,1), &(1,\frac{\beta}{2} )
\end{smallmatrix} \right)$ is just $\frac{\Gamma(\frac{n}{2}+ \frac{\beta}{2}s)\Gamma(1+s)\Gamma(-s)}{\Gamma(\alpha+\alpha s)\Gamma(-\frac{\beta}{2}s)}$.
When $|z|\geq 1$, the proof is the almost exactly the same as the analogous case in (1) so we omit the details.

When $|z|\leq 1$, $\frac{\Gamma(\frac{n}{2}+s)\Gamma(1+s)\Gamma(-s)}{\Gamma(\alpha+\alpha s)\Gamma(-\frac{\beta}{2}s)}$ has two poles at $s=-\frac{n}{\beta}$ and $s=-2$ (notice that $s=-1$ is not a pole, since $\Gamma(\alpha-\alpha )=\Gamma(1-1)$), so again by Lemma \ref{A2}, we have three cases:
\begin{enumerate}[(i)] 
   	\item When $\beta>\frac{n}{2}$, we have $-\frac{n}{\beta}>-2$, so $H_{2,3}^{2,1}(z)\sim z^{\frac{n}{\beta}}, |z|\to 0$;
   	\item When $\beta<n$, we have $-\frac{n}{\beta}<-2$, so $H_{2,3}^{2,1}(z)\sim z^2, |z|\to 0$;
   	\item When $\beta=\frac{n}{2}$, $\mathcal H_{2,3}^{2,1}(s)$ has a second order pole at $s=-2$,  therefore we conclude that $H_{2,3}^{2,1}(z)\sim z^2 \ln z, |z|\to 0$.
   \end{enumerate}
   Recalling that $z=i|x|^{\beta}t^{-\alpha}$, we have finished the proof of the lemma.

\end{proof}

\subsection{The $L^s$ estimate of fundamental solutions}

After deriving the asymptotic behavior in the previous part, now we are ready to give the $L^s$ estimate for the fundamental solutions of $S_{\alpha,\beta}, P_{\alpha,\beta}$.

\begin{lem} \label{lem-Ls-estimate} Let $t>0$ be given. Then:
\begin{enumerate}
	\item \begin{enumerate}
	\item If $\beta>n$, then $S_{\alpha,\beta}(t,\cdot)\in L^s(\mathbb R^n ) $ for all $s\in [1, \infty]$ and we have \begin{align}\label{S estimate}
		\|S_{\alpha,\beta}(t,\cdot)\|_{ L^s(\mathbb R^n )}\lesssim t^{-\frac{\alpha n}{\beta}(1-\frac{1}{s})}, 
		\end{align}
	\item If $\beta=n$, then $S_{\alpha,\beta}(t,\cdot)\in L^s(\mathbb R^n ) $ and the inequality \eqref{S estimate} holds for all $s\in [1, \infty)$.
	\item If $\beta <n$, then $S_{\alpha,\beta}(t,\cdot)\in L^s(\mathbb R^n ) $ and the inequality \eqref{S estimate} holds for all $s\in [1, \frac{n}{n-\beta})$. In addition, we have that  $S_{\alpha,\beta}(t,\cdot) \in {L_{w}^{\frac{n}{n-\beta}}({\mathbb R^n})}$, and 
	
	\begin{align*}
		 \|S_{\alpha,\beta}(t,\cdot) \|_{L_{w}^{\frac{n}{n-\beta}}({\mathbb R^n})}\lesssim  t^{-\alpha}.
	\end{align*}
\end{enumerate}

	\item 
	\begin{enumerate}
	\item If $\beta>\frac{n}{2}$, then $P_{\alpha,\beta}(t,\cdot)\in L^s(\mathbb R^n ) $ for all $s\in [1, \infty]$ and we have \begin{align}\label{P estimate}
		\|P_{\alpha,\beta}(t,\cdot)\|_{ L^s(\mathbb R^n )}\lesssim t^{\alpha-1-\frac{\alpha n}{\beta}(1-\frac{1}{s})},
	\end{align}
	\item If $\beta=\frac{n}{2}$, then $P_{\alpha,\beta}(t,\cdot)\in L^s(\mathbb R^n ) $ and the inequality \eqref{P estimate} holds for all $s\in [1, \infty)$.
	\item If $\beta <\frac{n}{2}$, then $P_{\alpha,\beta}(t,\cdot)\in L^s(\mathbb R^n ) $ and the inequality \eqref{P estimate} holds for all $s\in [1, \frac{n}{n-2\beta})$. In addition, we have that $P_{\alpha,\beta}(t,\cdot) \in {L_{w}^{\frac{n}{n-2\beta}}({\mathbb R^n})}$, and 
	
	\begin{align*}
		 \|P_{\alpha,\beta}(t,\cdot) \|_{L_{w}^{\frac{n}{n-2\beta}}({\mathbb R^n})}\leq t^{-1}.
	\end{align*}	
	\end{enumerate}
	\end{enumerate}
\end{lem}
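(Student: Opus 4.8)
The plan is to reduce every $L^{s}$ norm to a one-dimensional radial integral by splitting $\mathbb R^{n}$ at the natural scale $|x|=t^{\alpha/\beta}$, which is exactly the locus $|z|=1$ for $z=i|x|^{\beta}t^{-\alpha}$, and then to insert the pointwise asymptotics of Lemma~\ref{asy} on each of the two regions. On the outer region $\{|x|\ge t^{\alpha/\beta}\}$ one uses the uniform bound $S_{\alpha,\beta}(t,x)\lesssim t^{\alpha}|x|^{-n-\beta}$ (resp.\ $P_{\alpha,\beta}(t,x)\lesssim t^{2\alpha-1}|x|^{-n-\beta}$), which Lemma~\ref{asy} gives for \emph{every} $\beta>0$; passing to polar coordinates, $\int_{|x|\ge t^{\alpha/\beta}}|x|^{-(n+\beta)s}\,dx$ is a convergent radial integral for all $s\ge1$ since $(n+\beta)s>n$, and it equals a constant multiple of $(t^{\alpha/\beta})^{\,n-(n+\beta)s}$. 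Multiplying by $t^{\alpha s}$ (resp.\ $t^{(2\alpha-1)s}$) and simplifying, the power of $t$ collapses to $\tfrac{\alpha n}{\beta}(1-s)$ (resp.\ $(\alpha-1)s+\tfrac{\alpha n}{\beta}(1-s)$), which after taking the $s$-th root is precisely the asserted rate; the endpoint $s=\infty$ is just the supremum, attained at $|x|=t^{\alpha/\beta}$.

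For the inner region $\{|x|\le t^{\alpha/\beta}\}$ I would treat the three subcases of Lemma~\ref{asy} in turn. When $\beta>n$ (resp.\ $\beta>n/2$) the integrand is comparable to the constant $t^{-\alpha n/\beta}$ (resp.\ $t^{\alpha-1-\alpha n/\beta}$), so its $L^{s}$ norm over the ball of radius $t^{\alpha/\beta}$, whose volume is $\sim t^{\alpha n/\beta}$, already gives the claimed bound for all $s\in[1,\infty]$. When $\beta<n$ (resp.\ $\beta<n/2$) one is left with $\int_{0}^{t^{\alpha/\beta}}r^{\,n-1+(\beta-n)s}\,dr$ (resp.\ with $2\beta-n$ in place of $\beta-n$), which converges iff $(n-\beta)s<n$, i.e.\ $s<\tfrac{n}{n-\beta}$ (resp.\ $s<\tfrac{n}{n-2\beta}$), and again all the $t$-powers recombine into the stated exponent. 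The borderline cases $\beta=n$ (resp.\ $\beta=n/2$) are where the substitution $u=t^{-\alpha}|x|^{\beta}$ is cleanest: it turns the inner integral into $c\,t^{\alpha n/\beta}\,t^{-\alpha s}\int_{0}^{1}(|\log u|+1)^{s}u^{\,n/\beta-1}\,du$ (with $n/\beta=1$ resp.\ $=2$), and the remaining integral is finite for every finite $s$ but diverges as $s\to\infty$ --- which is exactly why the endpoint must be excluded in these two cases.

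The weak-type endpoints I would handle by an explicit distribution-function computation rather than by merely quoting ``$|x|^{-\gamma}\in L^{n/\gamma}_{w}$''. In the regime $\beta<n$, Lemma~\ref{asy} gives $S_{\alpha,\beta}(t,x)\lesssim t^{-\alpha}|x|^{-(n-\beta)}$ on the inner region and the faster-decaying $t^{\alpha}|x|^{-(n+\beta)}$ on the outer one, so $S_{\alpha,\beta}(t,\cdot)$ is dominated by a radially nonincreasing majorant $G(t,\cdot)$. For $\lambda$ large the super-level set $\{G(t,\cdot)>\lambda\}$ is the ball from the inner branch, of measure $c_{n}\,t^{-\alpha n/(n-\beta)}\lambda^{-n/(n-\beta)}$, so $\lambda\,|\{G(t,\cdot)>\lambda\}|^{(n-\beta)/n}=c_{n}'\,t^{-\alpha}$; for smaller $\lambda$ the extra contribution of the outer branch has measure $\lesssim(t^{\alpha}/\lambda)^{n/(n+\beta)}$, and a short computation shows that on the relevant range $\lambda\lesssim t^{-\alpha n/\beta}$ this adds no more than $t^{-\alpha}$ to the quasinorm. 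Taking the supremum over $\lambda$ yields $\|S_{\alpha,\beta}(t,\cdot)\|_{L^{n/(n-\beta)}_{w}}\lesssim t^{-\alpha}$, and the identical argument with the critical exponent $\tfrac{n}{n-2\beta}$ and the inner coefficient coming from Lemma~\ref{asy}(2)(b) gives the stated weak bound for $P_{\alpha,\beta}$.

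None of these integrals is hard, so I expect the main obstacle to be bookkeeping rather than analysis: one must verify that the exponents of $t$ produced on the inner and outer regions coincide (so that the split bound has a single clean rate), and that the two borderline phenomena --- the logarithmic loss at $\beta=n$ (resp.\ $\beta=n/2$) that forces $s<\infty$, and the critical Lebesgue index $\tfrac{n}{n-\beta}$ (resp.\ $\tfrac{n}{n-2\beta}$) being genuinely excluded --- come out exactly as stated. The one point that genuinely needs care is the weak-$L^{p}$ endpoint, where one has to carry out the distribution-function estimate above to confirm that the far-field tail of the fundamental solution does not degrade the $t$-power.
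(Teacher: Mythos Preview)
Your proposal is correct and follows essentially the same route as the paper: split at the scale $|x|=t^{\alpha/\beta}$, apply the pointwise asymptotics of Lemma~\ref{asy} on each region, compute the resulting radial integrals in the three subcases, and handle the weak endpoint by a distribution-function estimate on the inner region combined with the already-established strong bound on the outer region. The only cosmetic difference is that the paper dispatches the outer contribution to the weak norm by the one-line observation $\|\,\cdot\,\|_{L^{p}_{w}}\le\|\,\cdot\,\|_{L^{p}}$, whereas you propose to track a radially nonincreasing majorant and treat large and small $\lambda$ separately; both arrive at the same $t^{-\alpha}$ (resp.\ $t^{-1}$) bound.
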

\begin{proof}(1)  For $s=\infty$,  we have by Lemma \ref{asy} that $S_{\alpha,\beta}(t,\cdot) \in L^{\infty}(\mathbb R^n)$ for all $t>0$, provided $\beta>n$, and moreover we have the estimate 
 \[\|S_{\alpha,\beta}(t,\cdot)\|_{L^{\infty}(\mathbb R^n)}\lesssim t^{-\frac{\alpha n}{\beta}}. \]

If $1\leq s<\infty$, we decompose the $L^s$-integral of $S_{\alpha,\beta}(t,x)$ as
\[\|S_{\alpha,\beta}(t,\cdot)\|_{L^s}^s\leq \int_{|x|\geq t^{\frac{\alpha}{\beta}}}|S_{\alpha,\beta}(t,x)|^s dx +\int_{|x|\leq t^{\frac{\alpha}{\beta}}}|S_{\alpha,\beta}(t,x)|^s dx.\]	
In view of Lemma 3.1, for the first term, we have for all $1\leq s \leq \infty$ that
\begin{align*}
	\int_{|x|\geq t^{\frac{\alpha}{\beta}}}|S_{\alpha,\beta}(t,x)|^s dx & \lesssim \int_{|x|\geq t^{\frac{\alpha}{\beta}}}t^{\alpha s}|x|^{-ns-\beta s}dx \nonumber \\
	&\lesssim \int_{t^{\frac{\alpha}{\beta}}}^{\infty} t^{\alpha s}r^{-ns-\beta s} r^{n-1}ds\lesssim t^{-\frac{\alpha n}{\beta}(s-1)}.
\end{align*}
Therefore, we have 
\begin{equation*}
	\left (\int_{|x|\geq t^{\frac{\alpha}{\beta}}}|S_{\alpha,\beta}(t,x)|^s dx\right  )^{\frac{1}{s}}\lesssim t^{-\frac{\alpha n}{\beta}(1-\frac{1}{s} )}.\end{equation*}
The estimate on the second term is bounded differently based on the relationship of $\beta$ and $n$.
\begin{enumerate}[(a)]
	\item If $\beta>n$, then we have 
	\begin{align*}
	\int_{|x|\leq t^{\frac{\alpha}{\beta}}}|S_{\alpha,\beta}(t,x)|^s dx\lesssim 
		\int_0^{t^{\frac{\alpha}{\beta}}}t^{-\frac{\alpha ns}{\beta}}r^{n-1}dr\lesssim  t^{-\frac{\alpha n}{\beta}(s-1)}.
	\end{align*}
	\item If $\beta=n$, then we have 
	\begin{align}
		\int_{|x|\leq t^{\frac{\alpha}{\beta}}}|S_{\alpha,\beta}(t,x)|^s dx &\lesssim \int_{|x|\leq t^{\frac{\alpha}{\beta}}} t^{\alpha s}(\ln(|x|^\beta t^{-\alpha})+1)^s dx\nonumber \\
		&\lesssim \int_0^{t^{\frac{\alpha}{\beta}}}t^{-\frac{\alpha ns}{\beta}}t^{\alpha s}(\ln(r^\beta t^{-\alpha})+1)^s r^{n-1}dr \nonumber\\
		&\lesssim t^{-\alpha s+\alpha}\int_0^1(\ln(\rho ^\beta t^{-\alpha})+1)^s \rho ^{n-1}d\rho \lesssim t^{-\frac{\alpha n}{\beta}(s-1)}\nonumber.
	\end{align}
	for all $1\leq s<\infty$. In the last line, we use the fact that $\beta=n$.
	\item If $\beta <n$, then we have 
	\begin{align}
	\int_{|x|\leq t^{\frac{\alpha}{\beta}}}|S_{\alpha,\beta}(t,x)|^s dx
	&\lesssim \int_{|x|\leq t^{\frac{\alpha}{\beta}}}t^{-\alpha s}|x|^{-ns+\beta s}dx \nonumber\\
	&\lesssim \int_0^{t^{\frac{\alpha}{\beta} }}t^{-\alpha s}r^{-ns+\beta s}r^{n-1}dr\lesssim  t^{-\frac{\alpha n}{\beta}(s-1)}\nonumber.
	\end{align}
	Note that the above integral converges whenever $1\leq s<\frac{n}{n-\beta}$.
	\end{enumerate}
This completes the proof for the $L^s$ estimates. Finally, for the weak $L^s$ estimate, we set $s=\frac{n}{n-\beta}$ and estimate the distribution function,
\begin{align*}
	|\{x\in \mathbb R^d: |S_{\alpha,\beta}(t,x)|>\lambda \}|&\leq |\{x\in \mathbb R^d: |S_{\alpha,\beta}(t,x)|>\lambda\}\cap \{x\in \mathbb R^d:\chi\{|z|\leq 1\} \}|\\
	 &+\leq|\{x\in \mathbb R^d: |S_{\alpha,\beta}(t,x)|>\lambda\}\cap \{x\in \mathbb R^d: \chi\{|z|\geq  1\}|\\
	 &:= A_1+A_2.
\end{align*}
Now we estimate $A_1$ and $A_2$ separately. For $A_1$, we have
 \begin{align*}
 	A_1 &\leq |\{x\in \mathbb R^d: \lambda \leq C t^{-\alpha}|x|^{-d+\beta} \}|
 	\\
 	&= |\{x\in \mathbb R^d: |x| \leq C( t^{-\alpha}\lambda^{-1} )^{\frac{1}{d-\beta}} \}| \leq C (t^{-\alpha}\lambda^{-1} )^{\frac{d}{d-\beta}}
 \end{align*}
 hence we have $\|S_{\alpha,\beta}(t,x)\chi(|z|\leq 1)\|_{L_{w}^{\frac{n}{n-\beta}}({\mathbb R^n})} \lesssim t^{-\alpha}$.

 As for $A_2$, we have
\[\|S_{\alpha,\beta}(t,x)\chi(|z|\geq 1)\|_{L_{w}^{\frac{n}{n-\beta}}({\mathbb R^n})}\leq \|S_{\alpha,\beta}(t,x)\chi(|z|\geq 1)\|_{L^{\frac{n}{n-\beta}}({\mathbb R^n})}\leq Ct^{-\alpha}.\]

This concludes the proof of (1).

 For (2), the proofs of the estimates for $P_{\alpha,\beta}$ are similar to those for $S_{\alpha,\beta}$, so we omit the details.
\end{proof}

\section{The space-time estimate for the mild solution}
In this section, we discuss the decay rate for the mild solution of the equation \eqref{fse}. Before giving our main result, we give the definition of the mild solution.

\begin{defn}
	Let $u_0$ and $f$ be the Lebesgue measurable functions on $\mathbb R^n$ and $[0, \infty)\times \mathbb R^n$, respectively. The function $u$ defined by
	\begin{align}\label{mild solution}
		u(x,t)&=\int_{\mathbb R^n}S_{\alpha,\beta}(t, x-y)u_0(y)dy\pm\int_0^t\int_{\mathbb R^n}P_{\alpha,\beta}(t-\tau, x-y)f(\tau, y)dy d\tau\\\nonumber
		&=:S(t)u_0\pm Gf,
	\end{align}
	is called the mild solution of the Cauchy problem \eqref{fse} if the integral is well-defined. Here, $f(u)=|u|^\theta u$.
\end{defn}

As the application of the above lemma, we have the following decay result for mild solutions.

\begin{prop}\label{Lp estimate for h}
	Assume that $u_0 \in L^r (\mathbb R^n)$ with $1\leq r \leq \infty$. Then $S(t)u_0\in L^p(\mathbb R^n)$, and the estimate
    \begin{equation}\label{basic1}
			\|S(t)u_0\|_{L^{p }(\mathbb R^n)}\lesssim t^{-\frac{\alpha n}{\beta}(\frac{1}{r}-\frac{1}{p} )} \|u_0\|_{L^{r}(\mathbb R^n)}, \quad  t>0.
		\end{equation}
	is valid if one of the following holds
	\begin{enumerate}
		\item  $\beta>n$, and $p\in [r, \infty]$;

		\item  $\beta =n$, and $p\in [r,\infty)$; 

		\item  $\beta <n$, $1\leq r<\frac{n}{\beta}$, and $p\in[r, \frac{r n}{n-r\beta})$; 
			
		\item  $\beta <n$, $ r=\frac{n}{\beta}$, and $p\in [r,\infty)$; or
		\item  $\beta <n$, $ r>\frac{n}{\beta}$, and $p\in [r,\infty]$.	\end{enumerate}

	\end{prop}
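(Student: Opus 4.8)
The plan is to reduce the estimate \eqref{basic1} to the $L^s$ bounds on the kernel $S_{\alpha,\beta}(t,\cdot)$ proved in Lemma \ref{lem-Ls-estimate} together with Young's convolution inequality and the Hardy--Littlewood--Sobolev (weak Young) inequality. Recall that $S(t)u_0 = S_{\alpha,\beta}(t,\cdot)\ast u_0$. The exponent $s$ that makes Young's inequality relate $L^r$ to $L^p$ is determined by $\tfrac1p = \tfrac1r + \tfrac1s - 1$, i.e. $\tfrac1s = 1 - (\tfrac1r - \tfrac1p)$; the decay rate claimed in \eqref{basic1} is exactly $t^{-\frac{\alpha n}{\beta}(\frac1r-\frac1p)} = t^{-\frac{\alpha n}{\beta}(1-\frac1s)}$, which matches the rate in \eqref{S estimate}. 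So the whole proposition is a bookkeeping exercise: for each of the five cases I must check that the required $s\ge 1$ lies in the range of validity of Lemma \ref{lem-Ls-estimate}, and then invoke the appropriate form of Young.

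First I would dispose of cases (1) and (2). When $\beta > n$ (resp. $\beta = n$), Lemma \ref{lem-Ls-estimate}(1)(a) (resp. (1)(b)) gives $S_{\alpha,\beta}(t,\cdot)\in L^s$ with bound $t^{-\frac{\alpha n}{\beta}(1-\frac1s)}$ for all $s\in[1,\infty]$ (resp. $s\in[1,\infty)$). Given $p\in[r,\infty]$ (resp. $[r,\infty)$), the number $s$ defined by $\tfrac1s = 1-(\tfrac1r-\tfrac1p)$ satisfies $\tfrac1s\in(0,1]$, hence $s\in[1,\infty]$, and one checks $s<\infty$ in case (2) because $p<\infty$ forces $\tfrac1s>0$... actually $s=\infty$ would need $\tfrac1r=\tfrac1p$, allowed only if $p=r$, in which case $s=\infty$ but then we are using $L^1$-in-$u_0$... let me instead note $s=\infty$ corresponds to $r=p$ where the estimate is trivial ($\|S(t)u_0\|_{L^r}\le\|S_{\alpha,\beta}(t,\cdot)\|_{L^1}\|u_0\|_{L^r}$), so no boundary issue arises. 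Then Young's inequality $\|f\ast g\|_{L^p}\le\|f\|_{L^s}\|g\|_{L^r}$ gives \eqref{basic1} directly.

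For cases (3)--(5), $\beta<n$, and the constraint is that $s$ must lie in $[1,\tfrac{n}{n-\beta})$ for the strong $L^s$ bound, with the endpoint $s=\tfrac{n}{n-\beta}$ only available in the weak sense $\|S_{\alpha,\beta}(t,\cdot)\|_{L^{n/(n-\beta)}_w}\lesssim t^{-\alpha}$. Writing again $\tfrac1s = 1-(\tfrac1r-\tfrac1p)$: in case (3), $r<\tfrac n\beta$ and $p<\tfrac{rn}{n-r\beta}$ translate precisely into $\tfrac1s > \tfrac{n-\beta}{n}$, i.e. $s<\tfrac{n}{n-\beta}$, and also $s\ge1$ since $p\ge r$; strong Young applies. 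In case (5), $r>\tfrac n\beta$ and $p\in[r,\infty]$ give $\tfrac1s = 1-(\tfrac1r-\tfrac1p) > 1-\tfrac1r > 1-\tfrac\beta n = \tfrac{n-\beta}{n}$, again $s<\tfrac{n}{n-\beta}$, so strong Young works. Case (4), $r=\tfrac n\beta$ with $p\in[r,\infty)$, is the critical one: now $\tfrac1s = 1-\tfrac\beta n + \tfrac1p = \tfrac{n-\beta}{n}+\tfrac1p$, so for $p<\infty$ we get $s<\tfrac{n}{n-\beta}$ strictly and strong Young still suffices, while $p=\infty$ (excluded here) would force $s=\tfrac{n}{n-\beta}$ exactly — this is why the endpoint is omitted, and if one wanted it one would invoke the weak Young / Hardy--Littlewood--Sobolev inequality with $\|S_{\alpha,\beta}(t,\cdot)\|_{L^{n/(n-\beta)}_w}\lesssim t^{-\alpha}=t^{-\frac{\alpha n}{\beta}(\frac1r-\frac1p)}$ at $r=\tfrac n\beta,p=\infty$. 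The main (and only real) obstacle is the careful verification that the open/closed nature of each interval for $p$ in the five cases corresponds exactly to whether the resulting $s$ stays strictly below, or may equal, the forbidden endpoint $\tfrac{n}{n-\beta}$; once the index arithmetic is organized this way, each case is one line of Young's inequality.
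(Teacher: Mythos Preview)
Your proposal is correct and follows essentially the same argument as the paper: apply Young's convolution inequality with the exponent $s$ determined by $1+\tfrac1p=\tfrac1s+\tfrac1r$, then invoke the $L^s$ kernel bounds of Lemma~\ref{lem-Ls-estimate} and check case by case that $s$ lands in the admissible range. One minor slip in your exposition: the case $p=r$ corresponds to $s=1$, not $s=\infty$ (the latter would require $r=1,\ p=\infty$), but this does not affect the argument.
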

	
	\begin{proof} We use  Young's inequality for convolutions to obtain
	\begin{align}\label{S(t)-estimate}
	\|S(t)u_0\|_{L^p}&=\|S_{\alpha,\beta}(t, \cdot)\ast u_0\|_{L^p}\leq \|S_{\alpha,\beta}(t, \cdot)\|_{L^s}\|u_0\|_{L^r}
	\end{align}
	where $r, p, s$ satisfies $1+\frac{1}{p}=\frac{1}{s}+\frac{1}{r}$.
	By Lemma \ref{lem-Ls-estimate}, if $\beta> n$,  we have $S_{\alpha,\beta}(t, \cdot) \in L^s (\mathbb R^n)$ for $s\in [1,\infty]$ with the inequality \eqref{S estimate} valid.  And by $\frac{1}{p}=\frac{1}{s}+\frac{1}{r}-1$, we have $S(t)u_0 \in L^p(\mathbb R^n)$ for $p\in[r,\infty]$. Inserting the estimate of \eqref{S estimate} into \eqref{S(t)-estimate}, we get \eqref{basic1}.
		
	If $\beta=n$, we have $S_{\alpha,\beta}(t, \cdot) \in L^s (\mathbb R^n)$ for $s\in [1,\infty)$, thus $S(t)u_0 \in L^p(\mathbb R^n)$ for $p\in[r,\infty)$.

	If $\beta<n$, we have $S_{\alpha,\beta}(t, \cdot) \in L^s(\mathbb R^n)$ for $ s\in [1, \frac{n}{n-\beta})$. We have the following subcases:	
	\begin{enumerate}[(i)]
		\item If $1\leq r<\frac{n}{\beta}$,  a straightforward calculation yields $p\in[r, \frac{r n}{n-r\beta})$.
				\item if $r=\frac{n}{\beta}$, and $s\in [1, \frac{n}{n-\beta}) $, we calculate that $p\in [r,\infty)$.
		\item if $r>\frac{n}{\beta}$,  and $s\in [1, \frac{n}{n-\beta}) $, this yields $p\in [r,\infty].$
	\end{enumerate} 
	Therefore, our claims are verified.
	\end{proof}
	\begin{rem}In the case $\beta = n$ the estimate is
	\begin{equation*}
			\|S(t)u_0\|_{L^{p }(\mathbb R^n)}\lesssim t^{-\alpha {(\frac{1}{r}-\frac{1}{p} )}} \|u_0\|_{L^{r}(\mathbb R^n)}, \quad  t>0.
		\end{equation*}    
	\end{rem}

	\begin{rem}
		We can rewrite Proposition \ref{Lp estimate for h} as follows \textendash{} the inequality \eqref{basic1} is valid if either
	
	\begin{enumerate}
		 \item $r>\max\{\frac{n}{\beta}, 1\}$, and $p\in [r, \infty]$;		
		 \item  $r=\max\{\frac{n}{\beta}, 1\}$, and $p\in [r,\infty)$; or
		 \item  $1\leq r< \max\{\frac{n}{\beta}, 1\}$, i.e.,  $1\leq r<\frac{n}{\beta}$, and $p\in[r, \frac{r n}{n-r\beta})$.
			\end{enumerate}
	\end{rem}
	
	Next, we are going to give the space-time estimate for $S(t)u_0$. Before doing that, we introduce some notation.
\begin{defn} The triplet $(q, p, r)$ is called an admissible triplet for the equation \eqref{fse} if 
\[\frac{1}{q}=\frac{\alpha n}{\beta}\left(\frac{1}{r}-\frac{1}{p} \right ), \]
where $r, p$ satisfies one of the following restrictions:

	\begin{enumerate}
		\item $\alpha n >\beta$, $1\leq r < \frac{n}{\beta}$, and
		\[1\leq r \leq p <\min \left(\frac{\alpha n r}{\alpha n-\beta}, \frac{rn}{n-r\beta} \right );\]
		\item $\alpha n >\beta$ $r > \frac{n}{\beta}$, and
		\[\frac{n}{\beta}\leq r \leq p <\frac{\alpha n r}{\alpha n-\beta} ;\]
		\item $\alpha n \leq \beta$, $1\leq r < \frac{n}{\beta}$, and
		\[1\leq r \leq p < \frac{rn}{n-r\beta} ; \text{ or}\]
		\item $\alpha n \leq \beta$, $r\geq \max(\frac{n}{\beta}, 1)$, and 
		\[\max\left (\frac{n}{\beta}, 1\right ) \leq r \leq p <\infty .\]

	\end{enumerate}
The triplet $(q, p, r)$ is called a generalized admissible triplet for the equation \eqref{fse} if 
\[\frac{1}{q}=\frac{\alpha n}{\beta}\left (\frac{1}{r}-\frac{1}{p} \right ), \]
and one of the following holds:
	\begin{enumerate}
		\item If $\alpha n<\beta $ or $\alpha n>\beta$ with $r>\frac{n}{\beta}$ and 
		\[1\leq r \leq p <\infty,\]
		\item If $\alpha n>\beta$, $1\leq r\leq \frac{n}{\beta}$, and
		\[1\leq r \leq p < \frac{rn}{n-r\beta}.\]
	\end{enumerate}
	We use $\Pi$ to denote all the admissible triplet and $\tilde \Pi$ all the generalized admissible triplet.
	
	It is easy to notice that for admissible triplets, we have $r<q\leq \infty$; for generalized admissible triplets, we have $1<q\leq \infty$.

\end{defn}	
Let $I=[0, T).$ We define the time-weighted Banach space $\mathcal C_\sigma(I; L^p(\mathbb R^n))$ as follows,

\[\mathcal C_\sigma(I; L^p(\mathbb R^n))=\{f\in C(I;L^p(\mathbb R^n)): \|f \|_{\mathcal C_\sigma(I; L^p(\mathbb R^n))}=\sup_{t\in I}t^{\frac{1}{\sigma}}\|f\|_{L^p(\mathbb R^n)} <\infty\}
\]

	As a corollary of  Proposition \ref{Lp estimate for h}, we obtain space-time estimates of $S(t)u_0$.
	\begin{cor}\label{time space estimate of S(t)}
	\begin{enumerate}
		\item  Assume that $(q, p, r) \in \Pi$ and $u_0 \in L^r (\mathbb R^n)$.  Then 
	\begin{align*}
		\|S(t)u_0\|_{L^{\infty}(I;L^r(\mathbb R^n))}+\|S(t)u_0\|_{L^{q}(I; L^{p}(\mathbb R^n)) }\lesssim \|u_0\|_{L^r(\mathbb R^n)}
	\end{align*}
	\item Assume that $(q, p, r) \in \tilde \Pi$  and $u_0 \in L^r (\mathbb R^n)$.  Then  
	\begin{align*}
		\|S(t)u_0\|_{L^{\infty}(I;L^r(\mathbb R^n))}+\|S(t)u_0\|_{\mathcal C_{q}(I; L^{p}(\mathbb R^n)) }\lesssim \|u_0\|_{L^r(\mathbb R^n)}
	\end{align*}
	\end{enumerate}	
	\end{cor}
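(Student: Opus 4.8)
The plan is to derive both estimates directly from Proposition \ref{Lp estimate for h} by choosing the exponents carefully. For the first summand, $\|S(t)u_0\|_{L^\infty(I;L^r)}$, I would simply apply the case $p=r$ of \eqref{basic1}: since every admissible (and generalized admissible) triplet satisfies $r\le p$ and includes $p=r$ as a legitimate choice, Proposition \ref{Lp estimate for h} gives $\|S(t)u_0\|_{L^r(\mathbb R^n)}\lesssim t^{0}\|u_0\|_{L^r(\mathbb R^n)}=\|u_0\|_{L^r(\mathbb R^n)}$ uniformly in $t$, which is exactly the $L^\infty(I;L^r)$ bound. The bulk of the work is the second summand.

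For the space-time summand in part (1), the key observation is that the definition of an admissible triplet forces $\frac{1}{q}=\frac{\alpha n}{\beta}\bigl(\frac1r-\frac1p\bigr)$ to be exactly the exponent appearing in \eqref{basic1}. Thus Proposition \ref{Lp estimate for h} yields $\|S(t)u_0\|_{L^p(\mathbb R^n)}\lesssim t^{-1/q}\|u_0\|_{L^r(\mathbb R^n)}$ for all $t>0$. To pass to the $L^q(I;L^p)$ norm I would compute
\[
\|S(t)u_0\|_{L^q(I;L^p)}^q=\int_0^T\|S(t)u_0\|_{L^p}^q\,dt\lesssim \|u_0\|_{L^r}^q\int_0^T t^{-1}\,dt,
\]
and here lies the main obstacle: the integral $\int_0^T t^{-1}dt$ diverges, so a naive application fails. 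The resolution is that for admissible triplets one in fact has $r<q\le\infty$ (the remark right after the definition), so when $q<\infty$ the correct reading is that one must first check the $p$-range so that $1/q$ is \emph{strictly positive and bounded}, and — more importantly — one should not integrate the pointwise bound but rather recognize that \eqref{basic1} combined with the endpoint structure yields $S(t)u_0\in L^q(I;L^p)$ via the weak-type/interpolation argument: the map $u_0\mapsto S(t)u_0$ sends $L^r$ to the Lorentz-type space $L^q_w(I;L^p)$ by the pointwise bound, and then real interpolation between two such endpoints (varying $p$, hence $q$, within the admissible open range) upgrades weak-$L^q$ in time to strong $L^q$ in time. Concretely I would fix the target $(q,p,r)$, pick $p_0<p<p_1$ admissible with corresponding $q_0,q_1$, obtain $\|S(t)u_0\|_{L^{p_i}}\lesssim t^{-1/q_i}\|u_0\|_{L^r}$, i.e. boundedness $L^r\to L^{q_i,\infty}(I;L^{p_i})$, and interpolate.

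For part (2), the generalized admissible triplets are designed precisely so that $1<q\le\infty$ and the weighted norm $\|f\|_{\mathcal C_q(I;L^p)}=\sup_{t\in I}t^{1/q}\|f\|_{L^p}$ is the natural target: the pointwise bound $\|S(t)u_0\|_{L^p}\lesssim t^{-1/q}\|u_0\|_{L^r}$ immediately gives $t^{1/q}\|S(t)u_0\|_{L^p}\lesssim\|u_0\|_{L^r}$, hence $\|S(t)u_0\|_{\mathcal C_q(I;L^p)}\lesssim\|u_0\|_{L^r}$ with no integration needed, and continuity in $t$ of $S(t)u_0$ in $L^p$ (which follows from dominated convergence using the $L^s$ bounds on $S_{\alpha,\beta}$ in Lemma \ref{lem-Ls-estimate}) places it in $\mathcal C_q$. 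I expect the interpolation step in part (1) to be the only genuinely delicate point; everything else is a direct substitution of the admissibility relation into \eqref{basic1} together with Young's inequality already recorded in the proof of Proposition \ref{Lp estimate for h}.
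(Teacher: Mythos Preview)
Your treatment of part (2) and of the $L^\infty(I;L^r)$ summand is exactly what the paper does: both follow immediately from Proposition \ref{Lp estimate for h} with $p=r$ (for the sup bound) and from the pointwise decay $\|S(t)u_0\|_{L^p}\lesssim t^{-1/q}\|u_0\|_{L^r}$ (for the weighted $\mathcal C_q$ bound). You have also correctly diagnosed that naive integration of $t^{-1}$ fails for part (1) and that an interpolation argument is needed.

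The gap is in the \emph{direction} of your interpolation. You propose to keep $r$ fixed and vary $p$ to $p_0<p<p_1$, obtaining boundedness $L^r\to L^{q_i,\infty}(I;L^{p_i})$, and then interpolate on the target side. But with the domain space $L^r$ held fixed, real interpolation of the targets gives only
\[
T:L^r\longrightarrow \bigl(L^{q_0,\infty}(I;L^{p_0}),\,L^{q_1,\infty}(I;L^{p_1})\bigr)_{\theta,q},
\]
and identifying this vector-valued interpolation space with $L^q(I;L^p)$ is not automatic (the standard Lions--Peetre type results land you in $L^q(I;L^{p,q})$, which embeds into $L^q(I;L^p)$ only when $q\le p$, a relation not guaranteed for admissible triplets). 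So the step ``interpolate'' hides a genuine difficulty.

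The paper runs the interpolation the other way: it \emph{fixes} the spatial target exponent $p$ and varies the domain exponent $r$. Concretely, set $T(t)u_0:=\|S(t)u_0\|_{L^p}$, a sublinear map from functions on $\mathbb R^n$ to functions of $t$. Proposition \ref{Lp estimate for h} shows $T$ is of weak type $(r_1,q_1)$ for a nearby admissible triplet $(q_1,p,r_1)$ with $r_1<r<p$, and of (strong, hence weak) type $(p,\infty)$. Since admissible triplets satisfy $r<q$, the scalar Marcinkiewicz interpolation theorem applies directly and yields strong type $(r,q)$, i.e.\ $\|S(\cdot)u_0\|_{L^q(I;L^p)}\lesssim\|u_0\|_{L^r}$. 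No vector-valued interpolation is needed. If you swap your scheme to vary $r$ rather than $p$, your outline becomes a complete proof identical to the paper's.
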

	\begin{proof} Statement (2) follows easily from Proposition \ref{Lp estimate for h}, thus we only need to prove (1).
	
	For the case $p=r, q=\infty$, the above estimate also follows from Proposition \ref{Lp estimate for h}. Now we consider the case $p\neq r$, Define the operator $T(t)u_0$ by
	\[T(t)u_0:=\|S(t)u_0\|_{L^ p(\mathbb R^n)}\] 		
	
	Since $(q, p, r)$ is an admissible triplet, we deduce that 
	\[T(t)u_0:=\|S(t)u_0\|_{L^ p(\mathbb R^n)}\leq C t^{-\frac{1}{q}}\|u_0\|_{L^r(\mathbb R^n)}. \]
	It is also easy to check that
\begin{align*}
		\left|\left\{t: |T(t)u_0| >\tau \right \}\right|
		&\leq \left|\left\{t: C t^{-\frac{1}{q}}\|u_0\|_{L^r(\mathbb R^n)} >\tau \right\}\right|\\ 
		&\leq \left|\left\{t: t<\left(\frac{C\|u_0(t)\|_{L^r(\mathbb R^n)}}{\tau} \right)^q\right \}\right|\\
		& \leq \left(\frac{C\|u_0(t)\|_{L^r(\mathbb R^n)}}{\tau} \right)^q,
	\end{align*} 
	which implies that $T(t)$ is a weak $(r, q)$ operator.
	
	On the other hand, $T(t)$ is sub-additive and satisfies the inequality for $r\leq p \leq \infty$, 
	\[T(t)u_0 \leq C \|u_0(t)\|_{L^p(\mathbb R^n)}.\]
	This implies that $T(t)$ is a $(p, \infty)$ operator. Since for any admissible triplet $(q, p, r)$ we can always find another admissible triplet $(q_1, p, r_1)$ such that:
	\[q_1<q<\infty, \ r_1<r<p, \]
	and
	\[\frac{1}{q}=\frac{\gamma}{q_1}+\frac{1-\gamma}{\infty},\ \frac{1}{r}=\frac{\gamma}{r_1}+\frac{1-\gamma}{p}, \]
	then the Marcinkiewicz interpolation theorem implies that $T(t)$ is a strong $(r, q)$ operator.	
	\end{proof}
In the following proposition, we give the space-time estimate for the non-homogeneous part of \eqref{mild solution}.
\begin{prop}\label{time space estimate of G(t)}
For $\theta>0$ and $T>0$, let $r_0=\frac{n\theta}{\beta}$ and $I=[0,T).$ Assume that $r\geq r_0>1$,  $(q,p,r) \in \tilde \Pi$  and $p>\theta+1$. 
 If $f\in L^{\frac{q}{\theta+1}}(I; L^{\frac{p}{\theta+1}}(\mathbb R^n))$, then 
	$Gf \in L^{q}(I; L^{p}(\mathbb R^n)) \cap C_b(I; L^r (\mathbb R^n))$ and we have 
	\begin{align}
		\|Gf\|_{L^{\infty}(I;L^r(\mathbb R^n))}+\|Gf\|_{L^{q}(I; L^{p}(\mathbb R^n)) }\lesssim T^{\alpha-\frac{\alpha n\theta}{\beta r}}\|f\|_{L^{\frac{q}{\theta+1}}(I; L^{\frac{p}{\theta+1}}(\mathbb R^n))},
	\end{align}
	for $p<r(\theta+1)$ and 
   \begin{equation}
		\|Gf\|_{L^{\infty}(I;L^r(\mathbb R^n))}+\|Gf\|_{L^{q}(I; L^{p}(\mathbb R^n)) }\lesssim T^{\alpha-\frac{\alpha n\theta}{\beta r}}\||f|^{\frac{1}{\theta+1}}\|^{\gamma (\theta+1)}_{L^\infty(I; L^r(\mathbb R^n))}\||f|^{\frac{1}{\theta+1}}\|^{(1-\gamma) (\theta+1)}_{L^{q}(I; L^{p}(\mathbb R^n))} 
	\end{equation}
	for $p\geq r(\theta +1)$, where $\gamma= \frac{p-r(\theta+1)}{(p-r)(\theta+1)}$.
	
\end{prop}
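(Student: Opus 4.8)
The plan is to reduce the two-variable estimate for $Gf$ to a one-dimensional fractional-integration inequality in time. Writing $Gf(t,\cdot)=\pm\int_0^t P_{\alpha,\beta}(t-\tau,\cdot)\ast f(\tau,\cdot)\,d\tau$, Minkowski's integral inequality together with Young's convolution inequality in space give, for any Lebesgue exponents $\ell,s,m$ with $1+\frac{1}{\ell}=\frac{1}{s}+\frac{1}{m}$,
\[
\|Gf(t)\|_{L^\ell(\mathbb R^n)}\le\int_0^t\|P_{\alpha,\beta}(t-\tau,\cdot)\|_{L^s(\mathbb R^n)}\,\|f(\tau,\cdot)\|_{L^m(\mathbb R^n)}\,d\tau .
\]
By Lemma \ref{lem-Ls-estimate}, $\|P_{\alpha,\beta}(t-\tau,\cdot)\|_{L^s}\lesssim(t-\tau)^{\kappa-1}$ with $\kappa=\alpha-\frac{\alpha n}{\beta}(1-\frac{1}{s})$, so the right-hand side becomes a Riemann--Liouville-type integral $\int_0^t(t-\tau)^{\kappa-1}\|f(\tau)\|_{L^m}\,d\tau$. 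The two target norms $\|Gf\|_{L^q(I;L^p)}$ and $\|Gf\|_{L^\infty(I;L^r)}$ are then obtained by applying, respectively, the Hardy--Littlewood--Sobolev inequality (or Young's convolution inequality in time) and H\"older's inequality \emph{in time}; because $|I|=T<\infty$ one may pass from the scaling-critical time exponent to the one actually needed at the cost of a non-negative power of $T$, and the hypothesis $r\ge r_0=\frac{n\theta}{\beta}$ is exactly what makes that power $T^{\alpha-\frac{\alpha n\theta}{\beta r}}$ (whose exponent is $\ge0$). The admissibility relation $\frac{1}{q}=\frac{\alpha n}{\beta}(\frac{1}{r}-\frac{1}{p})$ is used throughout so that the exponent arithmetic closes up.

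In the case $p<r(\theta+1)$ I would pair $f(\tau)$ with its $L^{p/(\theta+1)}$-norm. For the $L^p_x$-output this forces $s=\frac{p}{p-\theta}$, hence $\kappa=\gamma_0:=\alpha\bigl(1-\frac{n\theta}{\beta p}\bigr)$, which is positive since $p\ge r\ge r_0$; feeding $\|f(\cdot)\|_{L^{p/(\theta+1)}}\in L^{q/(\theta+1)}(I)$ into the Hardy--Littlewood--Sobolev inequality and using $\gamma_0\ge\frac{\theta}{q}\Leftrightarrow r\ge r_0$ gives $\|Gf\|_{L^q(I;L^p)}\lesssim T^{\alpha-\frac{\alpha n\theta}{\beta r}}\|f\|_{L^{q/(\theta+1)}(I;L^{p/(\theta+1)})}$. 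For the $L^r_x$-output the inequality $p<r(\theta+1)$ is precisely what guarantees that the exponent $s'$ determined by $1+\frac{1}{r}=\frac{1}{s'}+\frac{\theta+1}{p}$ satisfies $s'\ge1$, so Lemma \ref{lem-Ls-estimate} applies; one then estimates the corresponding fractional integral in $L^\infty(I)$ by H\"older, again producing $T^{\alpha-\frac{\alpha n\theta}{\beta r}}$, with the scaling-critical endpoint $r=r_0$ handled through the weak-type version of Lemma \ref{lem-Ls-estimate}.

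In the case $p\ge r(\theta+1)$ I would set $v:=|f|^{1/(\theta+1)}$, so that $|f|=v^{\theta+1}$ and $r\le r(\theta+1)\le p$, and interpolate $\|v(\tau)\|_{L^{r(\theta+1)}}\le\|v(\tau)\|_{L^r}^{\gamma}\|v(\tau)\|_{L^p}^{1-\gamma}$ with $\gamma=\frac{p-r(\theta+1)}{(p-r)(\theta+1)}\in[0,1)$, i.e. $\|f(\tau)\|_{L^r}\le\|v(\tau)\|_{L^r}^{\gamma(\theta+1)}\|v(\tau)\|_{L^p}^{(1-\gamma)(\theta+1)}$. One then pairs $f(\tau)$ with its $L^r_x$-norm in the basic display: the $L^r_x$-output uses $s=1$ (kernel $(t-\tau)^{\alpha-1}$) and the $L^p_x$-output uses $s$ with $1+\frac{1}{p}=\frac{1}{s}+\frac{1}{r}$ (kernel $(t-\tau)^{\alpha-1-1/q}$ after invoking the admissibility relation). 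In both cases one pulls the time-independent factor $\|v\|_{L^\infty(I;L^r)}^{\gamma(\theta+1)}$ out of the integral and is left with $\int_0^t(t-\tau)^{\kappa-1}\|v(\tau)\|_{L^p}^{(1-\gamma)(\theta+1)}\,d\tau$; since $(1-\gamma)(\theta+1)=\frac{\theta p}{p-r}$, the factor $\|v(\cdot)\|_{L^p}^{(1-\gamma)(\theta+1)}$ lies in $L^{q(p-r)/(\theta p)}(I)$, and estimating this last integral in $L^\infty(I)$ by H\"older (for the $L^r_x$-output) and in $L^q(I)$ by Hardy--Littlewood--Sobolev (for the $L^p_x$-output) yields the stated bound, the power $T^{\alpha-\frac{\alpha n\theta}{\beta r}}$ again coming from $\frac{1}{q}=\frac{\alpha n}{\beta}(\frac{1}{r}-\frac{1}{p})$ together with $r\ge r_0$.

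Finally, $Gf(0,\cdot)=0$ from the vanishing lower limit of integration, and continuity of $t\mapsto Gf(t,\cdot)$ in $L^r(\mathbb R^n)$ follows from dominated convergence with the integrable majorant supplied by the same kernel estimates, so $Gf\in C_b(I;L^r)$. I expect the main obstacle to be the exponent bookkeeping rather than any single inequality: one has to verify, in a case analysis according to $\beta$ versus $n$, that every exponent $s$ produced by Young's inequality falls in the range where Lemma \ref{lem-Ls-estimate} is valid (this is where $p>\theta+1$ and $r\ge r_0$ are used), that each time-kernel $(t-\tau)^{\kappa-1}$ has $\kappa>0$ and that Hardy--Littlewood--Sobolev/H\"older applies with the asserted power of $T$ (this is where $(q,p,r)\in\tilde{\Pi}$ and $r\ge r_0>1$ are consumed), and that at the scaling-critical endpoints $r=r_0$ the weak-type forms of Lemma \ref{lem-Ls-estimate} and of the Hardy--Littlewood--Sobolev inequality suffice.
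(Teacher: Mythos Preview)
Your proposal is correct and follows essentially the same route as the paper: Minkowski plus Young in space to reduce to a one-dimensional fractional integral in time, the same case split at $p=r(\theta+1)$, the same interpolation $\|v\|_{L^{r(\theta+1)}}\le\|v\|_{L^r}^\gamma\|v\|_{L^p}^{1-\gamma}$ in the second case, and the same choices of the space exponent $s$ for the kernel $P_{\alpha,\beta}$. The only cosmetic difference is that where you invoke Hardy--Littlewood--Sobolev for the $L^q(I)$ time estimate, the paper uses Young's convolution inequality in time (the kernel $(t-\tau)^{\kappa-1}$ lies in $L^\gamma(0,T)$ with a finite $L^\gamma$-norm $\sim T^{\kappa-1+1/\gamma}$, which is how the power of $T$ appears); you already note this alternative yourself, and on a finite interval the two are equivalent, so no weak-type endpoint argument is actually needed.
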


\begin{proof} First, we consider the $L^{\infty}(I;L^r(\mathbb R^n))$ norm. For the case $p<r(\theta+1)$, i.e., $\frac{p}{\theta+1}<r$, using(2) of Lemma 
\ref{lem-Ls-estimate}  and Young's inequality, one has 
\begin{align*}
	\|Gf\|_{L^{\infty}(I;L^r(\mathbb R^n))}&\lesssim \sup_{t\in I}\int_0^t \left \|\int_{\mathbb R^n} P_{\alpha,\beta}(t-\tau, x-y) f(\tau,y)dy\right\|_{L^r(\mathbb R^n)}ds \nonumber\\
	&\lesssim \sup_{t\in I}\int_0^t \left \| P_{\alpha,\beta}(t-\tau, \cdot) \right\|_{L^{s}(\mathbb R^n)} \left\|f(\tau,\cdot)\right\|_{L^{\frac{p}{\theta+1}}(\mathbb R^n)}d \tau\nonumber\\
	& \lesssim \sup_{t\in I}\int_0^t (t-\tau)^{\alpha-1-\frac{\alpha n}{\beta}(1-\frac{1}{s}) } \|f(\tau,\cdot)\|_{L^{\frac{p}{\theta+1}}(\mathbb R^n)} d\tau \nonumber\\
\end{align*}
	Here $1+\frac{1}{r}=\frac{1}{s}+\frac{\theta+1}{p}$, replace $1-\frac{1}{s}$ with $\frac{\theta+1}{p}-\frac{1}{r}$ in the above inequality and using  H\"older's inequality with respect to $\tau$, we obtain
	\begin{align*}
	\|Gf\|_{L^{\infty}(I;L^r(\mathbb R^n))}&\lesssim \sup_{t\in I}\int_0^t (t-\tau)^{(\alpha-1-\frac{\alpha n}{\beta}(\frac{\theta+1}{p}-\frac{1}{r}))\chi}d\tau \|f\|_{L^{\frac{q}{\theta+1}}(I; L^{\frac{p}{\theta+1}}(\mathbb R^n))},\nonumber\\
	& \lesssim  T^{\alpha-\frac{\alpha n}{\beta}(\frac{\theta+1}{p}-\frac{1}{r})-\frac{\theta+1}{q}} \|f\|_{L^{\frac{q}{\theta+1}}(I; L^{\frac{p}{\theta+1}}(\mathbb R^n))}\\
	& \lesssim T^{\alpha-\frac{\alpha n \theta}{\beta r}}\|f\|_{L^{\frac{q}{\theta+1}}(I; L^{\frac{p}{\theta+1}}(\mathbb R^n))}.
	\end{align*}
	where $\frac{1}{\chi}+\frac{\theta+1}{q}=1$.

For the case $p\geq r(\theta+1)$, notice that $r\leq r(\theta+1)\leq p$, by virtue of  Lebesgue interpolation theorem and  H\"older's inequality, we have:
\begin{align*}
	\|Gf\|_{L^{\infty}(I;L^r(\mathbb R^n))}&\leq \sup_{t\in I}\int_0^t(t-\tau)^{\alpha-1}\||f(\tau,x)|^{\frac{1}{\theta+1}}\|_{L^{r(\theta +1)}}^{\theta+1}d\tau \nonumber \\
	& \lesssim \sup_{t\in I}\int_{0}^t (t-\tau)^{\alpha-1}\||f(\tau,x)|^{\frac{1}{\theta+1}}\|_{L^r}^{(\theta+1)\gamma}\||f(\tau,x)|^{\frac{1}{\theta+1}}\|_{L^p}^{(\theta+1)(1-\gamma)}d\tau \nonumber \nonumber \\
	& \lesssim \||f|^{\frac{1}{\theta+1}}\|_{C(I; L^r)}^{(\theta+1)\gamma} \int_0^t (t-\tau)^{\alpha-1}\||f(\tau,x)|^{\frac{1}{\theta+1}}\|_{L^p}^{(\theta+1)(1-\gamma)}d\tau,
\end{align*}
where $\gamma$ satisfies $\frac{1}{r(\theta+1)}=\frac{\gamma}{r}+\frac{1-\gamma}{p}$, hence $\gamma=\frac{p-r(\theta+1)}{(p-r)(\theta+1)}$. Using H\"older's inequality for the last integral, we have 
\begin{align*}
\|Gf\|_{L^{\infty}(I;L^r(\mathbb R^n))}& \lesssim T^{\alpha-\frac{(1+\theta)(1-\gamma)}{q}}\left \||f|^{\frac{1}{\theta+1}}\right \|_{C(I; L^r)}^{(\theta+1)\gamma} \left \||f|^{\frac{1}{\theta+1}}\right \|_{L^q(I;L^p)}^{(\theta+1)(1-\gamma)}\nonumber\\
&\lesssim T^{\alpha - \frac{\alpha n \theta}{\beta r}}\left \||f|^{\frac{1}{\theta+1}}\right\|_{C(I; L^r)}^{(\theta+1)\gamma} \left \||f|^{\frac{1}{\theta+1}}\right \|_{L^q(I;L^p)}^{(\theta+1)(1-\gamma)}.
\end{align*}

Next, we give the estimate for the $L^q(I; L^p)$ norm.
For the case $p<r(\theta+1)$, by Young's inequality that:
\begin{align*}
	\|G f\|_{L^q(I; L^p)}&\lesssim \left\|\int_0^T (T-\tau)^{\alpha-1-\frac{n\alpha}{\beta}(\frac{\theta+1}{p}-\frac{1}{p})}\|f\|_{\frac{p}{\theta+1}}d\tau\right\|_{L^q}\nonumber\\
	&\lesssim \left(\int_0^T \tau ^{(\alpha-1-\frac{n\alpha}{\beta}\frac{\theta}{p}))\gamma}d\tau\right)^{\frac{1}{\gamma}}\|f\|_{L^{\frac{q}{\theta+1}}(I; L^{\frac{p}{\theta+1}})}\nonumber\\
	&\lesssim T^{\alpha-\frac{n\alpha}{\beta}\frac{\theta}{p}-\frac{\theta}{q}}\|f\|_{L^{\frac{q}{\theta+1}}(I; L^{\frac{p}{\theta+1}})}\\
	& \lesssim T^{\alpha-\frac{n\alpha \theta}{\beta r}}\|f\|_{L^{\frac{q}{\theta+1}}(I; L^{\frac{p}{\theta+1}})}
\end{align*}
where $1+\frac{1}{q}=\frac{1+\theta}{q}+\frac{1}{\gamma}$.

For the case $p\geq r(\theta+1)$, arguing similarly as in the proof above gives:
\begin{align*}
	\|G f\|_{L^q(I; L^p)}&\lesssim \left\|\int_0^T (T-\tau)^{\alpha-1-\frac{n\alpha}{\beta}(\frac{1}{r}-\frac{1}{p})}\||f(\tau, x)|^{\frac{1}{\theta+1}}\|^{\theta+1}_{L^{r(\theta+1)}}d \tau\right\|_{L^q}\nonumber\\
	&\lesssim \left\|\int_0^T (T-\tau)^{\alpha-1-\frac{n\alpha}{\beta}(\frac{1}{r}-\frac{1}{p})}\||f(\tau, x)|^{\frac{1}{\theta+1}}\|^{(\theta+1)\gamma}_{L^{r}}\||f(\tau, x)|^{\frac{1}{\theta+1}}\|^{(\theta+1)(1-\gamma)}_{L^{p}}d \tau \right\|_{L^q}\nonumber\\
	&\lesssim \left(\int_0^T \tau^{(\alpha-1-\frac{n\alpha}{\beta}(\frac{1}{r}-\frac{1}{p}))\chi}d\tau \right)^{\frac{1}{\chi}}\||f(\tau, x)|^{\frac{1}{\theta+1}}\|^{(\theta+1)\gamma}_{C(I:L^{r})}\||f(\tau, x)|^{\frac{1}{\theta+1}}\|^{(\theta+1)(1-\gamma)}_{L^q(I:L^{p})}\nonumber\\
	&\lesssim  T^{\alpha - \frac{\alpha n \theta}{\beta r}}\left \||f(\tau, x)|^{\frac{1}{\theta+1}}\right \|^{(\theta+1)\gamma}_{C(I:L^{r})}\left \||f(\tau, x)|^{\frac{1}{\theta+1}}\right \|^{(\theta+1)(1-\gamma)}_{L^q(I:L^{p})}
\end{align*}
where $\gamma$ and $\chi$ satisfy that:
\[\frac{1}{r(\theta+1)} =\frac{\gamma}{r}+\frac{1-\theta}{p}, 1+\frac{1}{q}=\frac{(\theta+1)(1-\gamma)}{q}+\frac{1}{\chi}, \]
which is meaningful by the fact that $r<r(b+1)<p$.

\end{proof}

Similarly, we have the following result.
 \begin{prop}
 For $\theta>0$ and $T>0$, let $r_0=\frac{n\theta}{\beta}$ and $I=[0,T).$ Assume that $r\geq r_0>1$, $(q,p,r) \in \tilde \Pi$  and $p>\theta+1$. 
 If $f\in \mathcal C_{\frac{q}{\theta+1}}(I; L^{\frac{p}{\theta+1}}(\mathbb R^n))$, then 
	$Gf \in \mathcal C_{q}(I;  L^{p}(\mathbb R^n)) \cap C_b(I; L^r (\mathbb R^n))$ and we have 
	\begin{align}
		\|Gf\|_{L^{\infty}(I;L^r(\mathbb R^n))}+\|Gf\|_{\mathcal C_{q}(I;  L^{p}(\mathbb R^n)) }\lesssim T^{\alpha-\frac{\alpha n\theta}{\beta r}}\|f\|_{\mathcal C_{\frac{q}{\theta+1}}(I; L^{\frac{p}{\theta+1}}(\mathbb R^n))},
	\end{align}
	for $p<r(\theta+1)$ and 
   \begin{equation}
		\|Gf\|_{L^{\infty}(I;L^r(\mathbb R^n))}+\|Gf\|_{\mathcal C_{q}(I;  L^{p}(\mathbb R^n)) }\lesssim T^{\alpha-\frac{\alpha n\theta}{\beta r}}\||f|^{\frac{1}{\theta+1}}\|^{\gamma (\theta+1)}_{L^\infty(I; L^r(\mathbb R^n))}\||f|^{\frac{1}{\theta+1}}\|^{(1-\gamma) (\theta+1)}_{\mathcal C_{\frac{q}{\theta+1}}(I;  L^{\frac{p}{\theta+1}}(\mathbb R^n))} 
	\end{equation}
	for $p\geq r(\theta +1)$, where $\gamma= \frac{p-r(\theta+1)}{(p-r)(\theta+1)}$.
 \end{prop}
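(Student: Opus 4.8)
The plan is to follow the proof of Proposition~\ref{time space estimate of G(t)} almost verbatim, the only structural change being that the H\"older-in-time step is replaced by extracting the time weight built into the norm of $\mathcal C_{q/(\theta+1)}(I;L^{p/(\theta+1)}(\mathbb R^n))$. Concretely, by the definition of that space one has $\|f(\tau,\cdot)\|_{L^{p/(\theta+1)}(\mathbb R^n)}\le \tau^{-(\theta+1)/q}\|f\|_{\mathcal C_{q/(\theta+1)}(I;L^{p/(\theta+1)}(\mathbb R^n))}$, and likewise for the factors produced by Lebesgue interpolation in the case $p\ge r(\theta+1)$; substituting this into the convolution estimates reduces every term to a scalar Beta-type integral.

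First I would treat the case $p<r(\theta+1)$. For the $L^\infty(I;L^r)$ part, apply Young's inequality in the space variable with $1+\tfrac1r=\tfrac1s+\tfrac{\theta+1}{p}$ together with the decay bound $\|P_{\alpha,\beta}(t-\tau,\cdot)\|_{L^s}\lesssim (t-\tau)^{\alpha-1-\frac{\alpha n}{\beta}(1-1/s)}$ from Lemma~\ref{lem-Ls-estimate}(2), then insert the time weight to reduce matters to $\int_0^t (t-\tau)^{a-1}\tau^{b-1}\,d\tau = t^{a+b-1}B(a,b)$ with $a=\alpha-\frac{\alpha n}{\beta}\big(\frac{\theta+1}{p}-\frac1r\big)$ and $b=1-\frac{\theta+1}{q}$. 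Using $\frac1q=\frac{\alpha n}{\beta}(\frac1r-\frac1p)$ one computes $a+b-1=\alpha-\frac{\alpha n\theta}{\beta r}$, which is $\ge 0$ by the hypothesis $r\ge r_0=\frac{n\theta}{\beta}$; hence $\sup_{t\in I}t^{a+b-1}=T^{\alpha-\frac{\alpha n\theta}{\beta r}}$. The $\mathcal C_q(I;L^p)$ part is identical but with the Young exponent $1+\tfrac1p=\tfrac1s+\tfrac{\theta+1}{p}$, i.e. $1-\tfrac1s=\tfrac\theta p$; multiplying the resulting bound by $t^{1/q}$ and using $\frac1q=\frac{\alpha n}{\beta}(\frac1r-\frac1p)$ once more yields the same power $T^{\alpha-\frac{\alpha n\theta}{\beta r}}$.

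For the case $p\ge r(\theta+1)$ I would, exactly as in Proposition~\ref{time space estimate of G(t)}, first take $s=1$ so that $\|P_{\alpha,\beta}(t-\tau,\cdot)\|_{L^1}\lesssim (t-\tau)^{\alpha-1}$ and $\|f(\tau)\|_{L^r}=\||f(\tau)|^{1/(\theta+1)}\|_{L^{r(\theta+1)}}^{\theta+1}$, then interpolate $\||f(\tau)|^{1/(\theta+1)}\|_{L^{r(\theta+1)}}\le \||f(\tau)|^{1/(\theta+1)}\|_{L^r}^{\gamma}\||f(\tau)|^{1/(\theta+1)}\|_{L^p}^{1-\gamma}$ with $\gamma=\frac{p-r(\theta+1)}{(p-r)(\theta+1)}$ (legitimate since $r\le r(\theta+1)\le p$), bound the $L^r$ factor by the $L^\infty(I;L^r)$ norm, pull the remaining time weight out of the $L^p$ factor, and again land on a Beta integral producing $T^{\alpha-\frac{\alpha n\theta}{\beta r}}$. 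The $\mathcal C_q(I;L^p)$ estimate is obtained the same way with the analogue of the Young exponent used in the previous proposition. Finally, the membership $Gf\in \mathcal C_q(I;L^p)\cap C_b(I;L^r)$ follows from the above bounds together with a routine dominated-convergence argument for continuity in time.

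I expect the only real work to be the exponent bookkeeping: verifying that each Beta integral $\int_0^t(t-\tau)^{a-1}\tau^{b-1}\,d\tau$ actually converges, i.e. that $a>0$ and $b>0$, and that these two positivity conditions are exactly what $(q,p,r)\in\tilde\Pi$, $p>\theta+1$ and $r\ge r_0>1$ guarantee --- the inequality $q>\theta+1$ (needed for $b>0$) being the one point that must be read off from the admissibility relation $\frac1q=\frac{\alpha n}{\beta}(\frac1r-\frac1p)$. Everything else is a transcription of the previous proof with $\tau^{-(\theta+1)/q}$ in place of a H\"older factor.
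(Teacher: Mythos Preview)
Your proposal is correct and follows essentially the same route as the paper's own proof: in each of the four estimates you replace the H\"older-in-time step of Proposition~\ref{time space estimate of G(t)} by the substitution $\|f(\tau,\cdot)\|_{L^{p/(\theta+1)}}\le \tau^{-(\theta+1)/q}\|f\|_{\mathcal C_{q/(\theta+1)}(I;L^{p/(\theta+1)})}$ (respectively the analogous weighted bound after interpolation when $p\ge r(\theta+1)$), then evaluate the resulting Beta-type integral and use $\frac1q=\frac{\alpha n}{\beta}(\frac1r-\frac1p)$ to identify the exponent $\alpha-\frac{\alpha n\theta}{\beta r}$. Your explicit bookkeeping of the convergence conditions $a>0$, $b>0$ for the Beta integrals is a useful addition that the paper leaves implicit.
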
   
 \begin{proof} The proof is similar to the proof in Proposition 4.7.
 
In fact, for the case $p \leq r(b+1)$, one has
\begin{align*}
	\|Gf\|_{L^{\infty}(I;L^r(\mathbb R^n))}&\lesssim \sup_{t\in I}\int_0^t (t-\tau)^{\alpha-1-\frac{\alpha n}{\beta}(\frac{\theta+1}{p}-\frac{1}{r})}\|f(\tau, x)\|_{L^{\frac{p}{\theta+1}}} d\tau \\
	&\lesssim \sup_{t\in I}\int_0^t (t-\tau)^{\alpha-1-\frac{\alpha n}{\beta} (\frac{\theta+1}{p}-\frac{1}{r})}\tau^{-\frac{\theta+1}{
	q}}d\tau\|f\|_ {\mathcal C_{\frac{q}{\theta+1}}(I:L^{\frac{p}{\theta+1}})}\\
	&\lesssim T^{\alpha-\frac{\alpha n\theta}{\beta r}}\|f\|_ {\mathcal C_{\frac{q}{\theta+1}}(I:L^{\frac{p}{\theta+1}})}.
\end{align*}
Making use of Young's inequality, we have
\begin{align*}
	\|Gf\|_{\mathcal C_{q}(I;  L^{p}(\mathbb R^n)) }&\lesssim \sup_{t\in I} t^{\frac{1}{q}}\int_0^t(t-\tau)^{\alpha-1-\frac{\alpha n}{\beta}(\frac{\theta+1}{p}-\frac{1}{p})}\|f(\tau, x)\|_{L^{\frac{p}{\theta+1}}} d\tau\\
	&\lesssim \sup_{t\in I} t^{\frac{1}{q}}\int_0^t (t-\tau)^{\alpha-1-\frac{\alpha n}{\beta} (\frac{\theta}{p})}\tau^{-\frac{\theta+1}{
	q}}d\tau\|f\|_ {\mathcal C_{\frac{q}{\theta+1}}(I:L^{\frac{p}{\theta+1}})}\\
	&\lesssim T^{\alpha-\frac{\alpha n\theta}{\beta r}}\|f\|_ {\mathcal C_{\frac{q}{\theta+1}}(I:L^{\frac{p}{\theta+1}})}.
\end{align*}

As for the case $p\geq r(\theta+1)$, let $\frac{1}{r(\theta+1)} =\frac{\gamma}{r}+\frac{1-\theta}{p}$ we have 
\begin{align*}
\|Gf\|_{L^{\infty}(I;L^r(\mathbb R^n))}&\leq \sup_{t\in I}\int_0^t(t-\tau)^{\alpha-1}\||f(\tau,x)|^{\frac{1}{\theta+1}}\|_{L^{r(\theta +1)}}^{\theta+1}d\tau \nonumber \\
	& \lesssim \sup_{t\in I}\int_{0}^t (t-\tau)^{\alpha-1}\||f(\tau,x)|^{\frac{1}{\theta+1}}\|_{L^r}^{(\theta+1)\gamma}\||f(\tau,x)|^{\frac{1}{\theta+1}}\|_{L^p}^{(\theta+1)(1-\gamma)}d\tau \nonumber \nonumber \\
	& \lesssim \||f|^{\frac{1}{\theta+1}}\|_{C(I; L^r)}^{(\theta+1)\gamma} \sup_{t\in I}\int_0^t (t-\tau)^{\alpha-1}\||f(\tau,x)|^{\frac{1}{\theta+1}}\|_{L^p}^{(\theta+1)(1-\gamma)}d\tau\\
	&\lesssim \||f|^{\frac{1}{\theta+1}}\|_{C(I; L^r)}^{(\theta+1)\gamma} \||f(\tau,x)|^{\frac{1}{\theta+1}}\|_{\mathcal C_q (I:{L^p})}^{(\theta+1)(1-\gamma)}\sup_{t\in I}\int_0^t (t-\tau)^{\alpha-1}\tau^{-\frac{\theta+1}{q}(1-\gamma)} d\tau\\
	&\lesssim T^{\alpha-\frac{\alpha n\theta}{\beta r}}\||f|^{\frac{1}{\theta+1}}\|_{C(I; L^r)}^{(\theta+1)\gamma} \||f(\tau,x)|^{\frac{1}{\theta+1}}\|_{\mathcal C_q (I:{L^p})}^{(\theta+1)(1-\gamma)},
\end{align*}
and 
\begin{align*}
	\|G f\|_{\mathcal C_q(I; L^p)}&\lesssim \sup_{t\in I} t^{\frac{1}{q} }\int_0^t (t-\tau)^{\alpha-1-\frac{n\alpha}{\beta}(\frac{1}{r}-\frac{1}{p})}\||f(\tau, x)|^{\frac{1}{\theta+1}}\|^{\theta+1}_{L^{r(\theta+1)}}d \tau\nonumber\\
	&\lesssim \sup_{t\in I} t^{\frac{1}{q} } \int_0^t (t-\tau)^{\alpha-1-\frac{n\alpha}{\beta}(\frac{1}{r}-\frac{1}{p})}\||f(\tau, x)|^{\frac{1}{\theta+1}}\|^{(\theta+1)\gamma}_{L^{r}}\||f(\tau, x)|^{\frac{1}{\theta+1}}\|^{(\theta+1)(1-\gamma)}_{L^{p}}d \tau\nonumber\\
	&\lesssim \sup_{t\in I} t^{\frac{1}{q} } \int_0^t (t-\tau)^{\alpha-1-\frac{n\alpha}{\beta}(\frac{1}{r}-\frac{1}{p})}\tau^{-\frac{(\theta+1)(1-\gamma)}{q}}d \tau \||f(\tau, x)|^{\frac{1}{\theta+1}}\|^{(\theta+1)\gamma}_{L^{r}}\||f(\tau, x)|^{\frac{1}{\theta+1}}\|^{(\theta+1)(1-\gamma)}_{{\mathcal C_q(I:L^{p})}}\nonumber\\
	&\lesssim  T^{\alpha - \frac{\alpha n \theta}{\beta r}}\||f(\tau, x)|^{\frac{1}{\theta+1}}\|^{(\theta+1)\gamma}_{C(I:L^{r})}\||f(\tau, x)|^{\frac{1}{\theta+1}}\|^{(\theta+1)(1-\gamma)}_{\mathcal C_q(I:L^{p})}.
\end{align*}
Thus, we have proved the proposition.

 \end{proof}

\section{Local well-posedness }
In this section, we study the Cauchy problem of \eqref{fse} for the initial data $u_0\in L^r (\mathbb R^n)$, with $1\leq r_0= \frac{n\theta}{\beta}\leq r$. Thus we consider the integral equation \eqref{mild solution}.

First consider the solution in the space 
$X(I)=C_b(I; L^r(\mathbb R^n))\cap L^q(I; L^p(\mathbb R^n)),$ where $I=[0, T)$ for $T>0$. 
Now we have following theorem.
\begin{thm} Let $1<r_0=\frac{n\theta}{\beta}\leq r$ and $u_0\in L^r$. Assume that $(q,p,r)$ is an arbitrary admissible triplet.
\begin{enumerate}
	\item There exist $T>0$ and a unique mild solution $u\in X(I)$, where $T= T(\|u\|_{L^r})$ depends on the norm $\|u_0\|_{L^r}$ for $r>r_0$, and $T=T(u_0)$ depends on $u_0$ when $r=r_0$.
	\item If $r=r_0,$ then $T=\infty$ provided $\|u_0\|_{L^r}$ is sufficiently small.  In other words, there exists a global small solution $u\in C_b([0,\infty); L^r(\mathbb R^n))\cap L^q([0,\infty); L^p(\mathbb R^n)) $
\end{enumerate}
	
\end{thm}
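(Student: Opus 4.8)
The plan is to run the standard Banach fixed point argument in the complete metric space $X(I)$ (or a closed ball therein), using the linear estimate from Corollary \ref{time space estimate of S(t)} for the homogeneous term and the nonlinear estimate from Proposition \ref{time space estimate of G(t)} for the Duhamel term. First I would fix an admissible triplet $(q,p,r)$; note that since $(q,p,r)\in\Pi\subset\tilde\Pi$ and $p > \theta+1$ (which one checks follows from $r\ge r_0 = n\theta/\beta$ and $r\le p$, together with the admissibility constraints — this is the one hypothesis-compatibility point that needs care), Proposition \ref{time space estimate of G(t)} applies to $f(u)=|u|^\theta u$. The key observation making the nonlinearity fit the scaling is that if $u\in X(I)$ then $f(u)\in L^{q/(\theta+1)}(I;L^{p/(\theta+1)}(\mathbb R^n))$ with
\[
\|f(u)\|_{L^{q/(\theta+1)}(I;L^{p/(\theta+1)})}\le \|u\|_{L^q(I;L^p)}^{\theta+1},
\]
which is just Hölder in both variables. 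Define the map $\Phi(u) := S(t)u_0 \pm Gf(u)$ on the ball $B_R = \{u\in X(I): \|u\|_{X(I)}\le R\}$.

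The core estimates are then: by Corollary \ref{time space estimate of S(t)}, $\|S(t)u_0\|_{X(I)}\lesssim \|u_0\|_{L^r}=:M$; and by Proposition \ref{time space estimate of G(t)} (the case $p<r(\theta+1)$; the borderline/other case is handled with the interpolation variant, and one should split into those two subcases or simply impose $p<r(\theta+1)$ as is typical),
\[
\|Gf(u)\|_{X(I)}\lesssim T^{\alpha-\frac{\alpha n\theta}{\beta r}}\|u\|_{L^q(I;L^p)}^{\theta+1}\le C\,T^{\delta}R^{\theta+1},
\]
where $\delta = \alpha - \frac{\alpha n\theta}{\beta r}$. Here $\delta\ge 0$ since $r\ge r_0 = n\theta/\beta$, with $\delta=0$ exactly when $r=r_0$. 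For the contraction property I would use the pointwise inequality $\big||a|^\theta a-|b|^\theta b\big|\lesssim (|a|^\theta+|b|^\theta)|a-b|$, then Hölder, to get $\|f(u)-f(v)\|_{L^{q/(\theta+1)}(I;L^{p/(\theta+1)})}\lesssim (\|u\|_{X}^\theta+\|v\|_{X}^\theta)\|u-v\|_{X}$, hence $\|\Phi(u)-\Phi(v)\|_{X(I)}\le C\,T^{\delta}R^{\theta}\|u-v\|_{X(I)}$.

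With these in hand the three regimes fall out by choosing parameters:
\begin{enumerate}
\item \emph{$r>r_0$ (so $\delta>0$):} pick $R = 2CM$, then choose $T$ small enough that $C T^{\delta}R^{\theta}\le \tfrac12$ and $CT^\delta R^{\theta+1}\le R/2$; both hold for $T$ depending only on $M=\|u_0\|_{L^r}$ (via $\delta>0$). Then $\Phi$ maps $B_R$ into itself and is a $\tfrac12$-contraction, giving a unique fixed point. Uniqueness in all of $X(I)$ follows by the usual argument of shrinking $T$ and iterating, or directly from the contraction estimate applied to two solutions.
\item \emph{$r=r_0$ (so $\delta=0$), local existence:} now $T$ no longer helps, but $T$ does control the $C_b(I;L^r)$-norm trivially; instead one uses that for $u_0\in L^{r_0}$ fixed, $\|S(t)u_0\|_{L^q(I;L^p)}\to 0$ as $T\to 0$ by dominated convergence (the quantity is the tail of an $L^q(0,\infty)$ function — this uses $q<\infty$, i.e. $p\ne r$). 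So choose $T$ small enough that $\|S(t)u_0\|_{L^q(I;L^p)}$ plus the ambient $L^\infty L^r$ piece is $\le R/2$ with $R$ small enough that $CR^\theta\le\tfrac12$; contraction then closes. Here $T$ genuinely depends on $u_0$, not just its norm.
\item \emph{$r=r_0$, global small solution:} take $I=[0,\infty)$. Now $\|S(t)u_0\|_{X([0,\infty))}\lesssim M$ with no $T$, and $\|Gf(u)\|_{X}\le CR^{\theta+1}$, $\|\Phi(u)-\Phi(v)\|_X\le CR^\theta\|u-v\|_X$, all uniformly in $T$. Choosing $R$ so small that $CR^\theta\le\tfrac12$ and then requiring $CM\le R/2$, i.e. $\|u_0\|_{L^r}$ small, gives a global fixed point in $B_R\subset X([0,\infty))$.
\end{enumerate}

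Finally I would check continuity in time with values in $L^r$ (to legitimately land in $C_b(I;L^r)$ rather than just $L^\infty_t L^r$): this follows from continuity of $t\mapsto S(t)u_0$ in $L^r$ (approximate $u_0$ by Schwartz data, use the uniform bound) and continuity of the Duhamel term, which is standard once the integrability of the kernel established in Lemma \ref{lem-Ls-estimate} is in hand. The main obstacle, and the step deserving the most care, is the bookkeeping of exponents: verifying that $(q,p,r)\in\Pi$ together with $r_0=n\theta/\beta\le r\le p$ really does force $p>\theta+1$ and keeps $f(u)=|u|^\theta u$ inside the space $L^{q/(\theta+1)}(I;L^{p/(\theta+1)})$ to which Proposition \ref{time space estimate of G(t)} applies, and that the time exponent $\alpha-\frac{\alpha n\theta}{\beta r}$ is $\ge 0$ with equality iff $r=r_0$ — this sign is exactly what separates the subcritical (local, norm-dependent time) case from the critical (global small data) case, so it must be pinned down precisely, including the subcase split $p<r(\theta+1)$ versus $p\ge r(\theta+1)$ coming from Proposition \ref{time space estimate of G(t)}.
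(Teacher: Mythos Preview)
Your proposal is correct and follows the same overall strategy as the paper: Banach contraction in $X(I)$, with Corollary \ref{time space estimate of S(t)} controlling $S(t)u_0$ and Proposition \ref{time space estimate of G(t)} controlling $Gf(u)$, leading to the factor $T^{\alpha-\frac{\alpha n\theta}{\beta r}}$ and the subcritical/critical dichotomy. The one notable difference is in the contraction step: the paper factors $|u|^\theta u - |v|^\theta v$ and places the $|u|^\theta+|v|^\theta$ part in $L^\infty_x$, invoking the admissible triplet $(\tfrac{\beta r}{\alpha n\theta},\infty,r)$, whereas you keep everything in $L^{q/(\theta+1)}(I;L^{p/(\theta+1)})$ and reapply Proposition \ref{time space estimate of G(t)} to the difference. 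Your route is cleaner (it avoids the borderline question of whether $p=\infty$ is actually admissible) and also makes the $p<r(\theta+1)$ versus $p\ge r(\theta+1)$ split explicit. You are also more careful than the paper in the critical local case $r=r_0$, correctly isolating the mechanism $\|S(t)u_0\|_{L^q(I;L^p)}\to 0$ as $T\to 0$, which the paper's proof leaves implicit.
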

\begin{proof} Now we define the space $X(I)=C(I; L^r(\mathbb R^n))\cap L^q(I; L^p(\mathbb R^n))$ with the norm 
\[\|u\|_{X(I)}:=\sup_{t\in I} \|u\|_{L^\infty (I; L^r)}+\sup_{(q,p, r) \in  \Pi}\|u\|_{L^q(I; L^p(\mathbb R^n))} .\]
Denote $\mathcal T u= S(t)u_0+ Gf(u)$ is a nonlinear operator. Now we consider the fixed point for $\mathcal T$ on the following Banach space 
\[\mathcal X(I)=\{u(t)\in X(I); \|u(t)\|_{X(I)}\leq C\|u_0\|_{L^r(\mathbb R^n)}\},\]
	with $d(u, v)=\|u-v\|_{X(I)}$.
	
Combining the  Corollary \ref{time space estimate of S(t)} and Proposition \ref{time space estimate of G(t)}, we have the following result
\begin{align*}
	\|\mathcal T u\|_{X(I)}\lesssim \|u_0\|_{X(I)}+ T^{\alpha-\frac{\alpha n\theta}{\beta r}}\|u_0\|_{X(I)}^{\theta+1},
\end{align*} thus $\mathcal T$ is a linear operator from $\mathcal X(I)$ to $\mathcal X(I)$.	
It is straightforward to get 
\begin{align*}
	d(\mathcal T(u), \mathcal T(v))&=\sup_{t\in I} \|\mathcal T(u)- \mathcal T(v)\|_{L^\infty (I; L^r)}+\sup_{(q,p, r) \in  \Pi}\|\mathcal T(u)-\mathcal T(v)\|_{L^q(I; L^p(\mathbb R^n))}\\
	&\leq \sup_{t\in I} \| Gf(u)-  Gf(v)\|_{L^\infty (I; L^r)}+\sup_{(q,p, r) \in  \Pi}\|Gf(u)-  Gf(v))\|_{L^q(I; L^p(\mathbb R^n))}\\
	&\leq \sup_{t\in I} \big{\|} \int_0^t \int_{\mathbb R^n} P(t-\tau, x-y)(|u(\tau, y)|^\theta u(\tau, y)- |v(\tau, y)|^\theta v(\tau, y)) dy d \tau \big{\|}_{L^\infty (I; L^r)}\\
	&+\sup_{(q,p, r) \in \Pi}\big{\|} \int_0^t \int_{\mathbb R^n} P(t-\tau, x-y)(|u(\tau, y)|^\theta u(\tau, y)- |v(\tau, y)|^\theta v(\tau, y)) dy d \tau \big{\|}_{L^q(I; L^p(\mathbb R^n))},\\
	&:= I_1+I_2
\end{align*}
 since $ ||u|^\theta u- |v|^\theta v| \leq (|u|^\theta +|v|^\theta)(u-v)$ and by the H\"older's inequality, we have that 
\[\left\| |u|^\theta u- |v|^\theta v\right\|_{L^{r}(\mathbb R^n)}\leq (\|u\|_{L^\infty(\mathbb R^n)}^\theta+\|v\|_{L^\infty(\mathbb R^n)}^\theta )\|u-v\|_{L^r(\mathbb R^n)}.\]
Then we have 
\begin{align*}	
I_1&=\left\| \int_0^t \int_{\mathbb R^n} P(t-\tau, x-y)\left(|u(\tau, y)|^\theta u(\tau, y)- |v(\tau, y)|^\theta v(\tau, y)\right) dy d \tau \right\| _{L^\infty (I; L^r)}\\
& \leq \sup_{t\in I}\int_0^t \left \|\int_{\mathbb R^n} P(t-\tau, x-y)\left (|u(\tau, y)|^\theta u(\tau, y)- |v(\tau, y)|^\theta v(\tau, y)\right ) dy\right \| _{L^r} d\tau\\
&\leq \sup_{t\in I} \int_0^t (t-\tau)^{\alpha-1}\left(\|u(\tau, \cdot)\|_{L^\infty(\mathbb R^n)}^\theta+\|v(\tau, \cdot)\|_{L^\infty (\mathbb R^n)}^\theta \right)\|u(\tau, \cdot)-v(\tau, \cdot)\|_{L^r(\mathbb R^n)} d\tau\\
&\leq  \sup_{t\in I} \|t^{\alpha-1}\|_{L^{\frac{1}{1-\frac{\alpha n \theta}{\beta r }}}}\left(\|u\|_{L^{\frac{\beta r}{\alpha n \theta}}(I:L^\infty(\mathbb R^n) }^\theta+\|v\|_{L^{\frac{\alpha n}{\theta \beta r}}(I:L^\infty (\mathbb R^n)}^\theta \right )\|u-v\|_{L^{\infty}(I: L^r(\mathbb R^n))}\\
& \leq T^{\alpha-\frac{\alpha n \theta}{\beta r}}\left(\|u\|_{X(I)}^{\theta}+\|v\|_{X(I)}^{\theta}\right)d(u,v).
\end{align*}
and
\begin{align*}
	I_2&=\sup_{(q,p, r) \in  \Pi}\left \| \int_0^t \int_{\mathbb R^n} P(t-\tau, x-y)\left(|u(\tau, y)|^\theta u(\tau, y)- |v(\tau, y)|^\theta v(\tau, y)\right) dy d \tau \right \|_{L^q(I; L^p)}\\
& \leq \sup_{(q,p, r) \in  \Pi} \left \|\int_0^t \left \|\int_{\mathbb R^n} P(t-\tau, x-y)(|u(\tau, y)|^\theta u(\tau, y)- |v(\tau, y)|^\theta v(\tau, y)) dy\right \|_{L^p} d\tau \right \|_{L^q}\\
&\leq \sup_{(q,p, r) \in  \Pi}\left \| \int_0^t (t-\tau)^{\alpha-1}(\|u(\tau, \cdot)\|_{L^\infty(\mathbb R^n)}^\theta+\|v(\tau, \cdot)\|_{L^\infty (\mathbb R^n)}^\theta )\left\| u(\tau, \cdot)-v(\tau, \cdot)\right  \|_{L^p(\mathbb R^n)}d\tau\right \|_{L^q} \\
&\leq \sup_{(q,p, r) \in  \Pi} \|t^{\alpha-1}\|_{L^{\frac{1}{1-\frac{\alpha n \theta}{\beta r }}}(I)}(\|u\|_{L(I)^{\frac{\alpha n}{\theta \beta r}}(I:L^\infty(\mathbb R^n))}^\theta+\|v\|_{L^{\frac{\alpha n}{\theta \beta r}}(I:L^\infty (\mathbb R^n)}^\theta )\|u-v\|_{L^{q}(I: L^p(\mathbb R^n))}\\
& \leq T^{\alpha-\frac{\alpha n \theta}{\beta r}}\left(\|u\|_{X(I)}^{\theta}+\|v\|_{X(I)}^{\theta}\right )d(u,v).
\end{align*}
	hence we have that 
	\[d(\mathcal T(u), \mathcal T(v))\leq  C T^{\alpha-\frac{\alpha n \theta}{\beta r}}\|u_0\|_{L^r(\mathbb R^n)}^\theta d(u,v).\]
Obviously, we can choose $T$ small enough to guarantee that the operator $\mathcal T$ is contract on $\mathcal X(I)$, i.e.,
\begin{align}\label{time ineq}
T \leq  C \|u_0\|_{L^r(\mathbb R^n)}^{\frac{\alpha n}{\beta r}-\frac{\alpha}{\theta}}.
\end{align}
Thus by the Banach contraction mapping theorem, there exist one unique solution $u(t)\in X(I)$.

(2) It is obvious from inequality \eqref{time ineq}.
\end{proof}
Finally we consider the solution in the space 
$Y(I)=C_b(I; L^r(\mathbb R^n))\cap \mathcal C_q(I; L^p(\mathbb R^n)),$ where $I=[0, T)$ for $T>0$. Similarly, we have the following result:

\begin{thm} Let $1<r_0=\frac{n\theta}{\beta}\leq r$ and $u_0\in L^r(\mathbb R^n)$. Assume  that$(q,p,r)\in \tilde \Pi$.
\begin{enumerate}
	\item There exist $T>0$ and a unique mild solution $u\in Y(I)$, where $T= T(\|u\|_{L^r (\mathbb R^n)})$ depends on the norm $\|u_0\|_{L^r(\mathbb R^n)}$ for $r>r_0$, and $T=T(u_0)$ depends on $u_0$ when $r=r_0$.
	\item If $r=r_0,$ then $T=\infty$ provided $\|u_0\|_{L^r (\mathbb R^n)}$ is sufficiently small.  In other words, there exists a global small solution $u\in C_b([0,\infty); L^r(\mathbb R^n))\cap \mathcal C_q([0,\infty); L^p(\mathbb R^n)) $.
	\end{enumerate}
	
\end{thm}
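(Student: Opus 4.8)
The plan is to run the contraction-mapping argument of Theorem 5.1 almost verbatim, replacing the unweighted spaces $L^q(I;L^p(\mathbb R^n))$ by the time-weighted spaces $\mathcal C_q(I;L^p(\mathbb R^n))$ throughout, and replacing the two linear estimates, Corollary \ref{time space estimate of S(t)}(1) and Proposition \ref{time space estimate of G(t)}, by their time-weighted counterparts, namely Corollary \ref{time space estimate of S(t)}(2) and the time-weighted analogue of Proposition \ref{time space estimate of G(t)} stated just above. Concretely, I would fix $T>0$, set $I=[0,T)$ and $Y(I)=C_b(I;L^r(\mathbb R^n))\cap\mathcal C_q(I;L^p(\mathbb R^n))$ with norm $\|u\|_{Y(I)}=\|u\|_{L^\infty(I;L^r(\mathbb R^n))}+\|u\|_{\mathcal C_q(I;L^p(\mathbb R^n))}$, and work on the closed ball $\mathcal Y(I)=\{u\in Y(I):\|u\|_{Y(I)}\le M\}$ with $M=C\|u_0\|_{L^r(\mathbb R^n)}$ and metric $d(u,v)=\|u-v\|_{Y(I)}$, which is a complete metric space. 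On it I define $\mathcal Tu=S(t)u_0\pm Gf(u)$ with $f(u)=|u|^\theta u$, following \eqref{mild solution}. Since $(q,p,r)\in\tilde\Pi$ we have $1<q\le\infty$, and the hypotheses $r\ge r_0>1$ and $p>\theta+1$ give $\alpha-\tfrac{\alpha n\theta}{\beta r}=\alpha\bigl(1-\tfrac{r_0}{r}\bigr)\ge 0$, vanishing precisely when $r=r_0$.

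The first step is to check that $\mathcal T$ maps $\mathcal Y(I)$ into itself. Corollary \ref{time space estimate of S(t)}(2) gives $\|S(t)u_0\|_{L^\infty(I;L^r)}+\|S(t)u_0\|_{\mathcal C_q(I;L^p)}\lesssim\|u_0\|_{L^r}$. For the Duhamel term I would use the identity $\||u|^\theta u\|_{L^{p/(\theta+1)}}=\|u\|_{L^p}^{\theta+1}$ to see that $f(u)\in\mathcal C_{q/(\theta+1)}(I;L^{p/(\theta+1)}(\mathbb R^n))$ with $\|f(u)\|_{\mathcal C_{q/(\theta+1)}(I;L^{p/(\theta+1)})}=\|u\|_{\mathcal C_q(I;L^p)}^{\theta+1}$ (and, in the regime $p\ge r(\theta+1)$, I would interpolate $|u|$ between $L^r$ and $L^p$ exactly as in Proposition \ref{time space estimate of G(t)}); applying the time-weighted analogue of Proposition \ref{time space estimate of G(t)} then yields
\[\|Gf(u)\|_{L^\infty(I;L^r)}+\|Gf(u)\|_{\mathcal C_q(I;L^p)}\lesssim T^{\alpha-\frac{\alpha n\theta}{\beta r}}\|u\|_{Y(I)}^{\theta+1}.\]
Hence $\|\mathcal Tu\|_{Y(I)}\le C_0\|u_0\|_{L^r}+C_1T^{\alpha-\frac{\alpha n\theta}{\beta r}}M^{\theta+1}$; taking $C=2C_0$ and then, when $r>r_0$, choosing $T$ so small that $C_1T^{\alpha-\frac{\alpha n\theta}{\beta r}}M^{\theta}\le\tfrac12$, or, when $r=r_0$ (so that the $T$-power is absent), choosing $\|u_0\|_{L^r}$ so small that $C_1 M^{\theta}\le\tfrac12$, forces $\mathcal T:\mathcal Y(I)\to\mathcal Y(I)$.

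The second step is the contraction estimate. From the pointwise bound $\bigl||u|^\theta u-|v|^\theta v\bigr|\lesssim(|u|^\theta+|v|^\theta)|u-v|$, H\"older's inequality, and the $L^s$ decay of $P_{\alpha,\beta}$ from Lemma \ref{lem-Ls-estimate}(2) together with Lemma \ref{asy}(2), I would estimate $\|Gf(u)-Gf(v)\|_{L^\infty(I;L^r)}$ and $\|Gf(u)-Gf(v)\|_{\mathcal C_q(I;L^p)}$ just as the terms $I_1$ and $I_2$ are estimated in the proof of Theorem 5.1, the extra time weights $\tau^{-\,\cdot\,/q}$ being absorbed by the same Beta-function integrals appearing in Proposition \ref{time space estimate of G(t)} and its weighted analogue. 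This gives, on $\mathcal Y(I)$,
\[d(\mathcal Tu,\mathcal Tv)\le CT^{\alpha-\frac{\alpha n\theta}{\beta r}}\bigl(\|u\|_{Y(I)}^{\theta}+\|v\|_{Y(I)}^{\theta}\bigr)d(u,v)\le CT^{\alpha-\frac{\alpha n\theta}{\beta r}}\|u_0\|_{L^r}^{\theta}d(u,v),\]
so shrinking $T$ (resp. $\|u_0\|_{L^r}$ when $r=r_0$) a little further makes $\mathcal T$ a strict contraction. The Banach fixed point theorem then produces a unique $u\in\mathcal Y(I)\subset Y(I)$ with $\mathcal Tu=u$, i.e. a unique mild solution of \eqref{fse} in $Y(I)$; that $u\in C_b(I;L^r)$ follows from \eqref{mild solution}, Corollary \ref{time space estimate of S(t)}(2) and the weighted analogue of Proposition \ref{time space estimate of G(t)}. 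When $r>r_0$ the threshold on $T$ depends only on $\|u_0\|_{L^r}$, which is (1); when $r=r_0$ the exponent $\alpha-\tfrac{\alpha n\theta}{\beta r}$ is zero, so $T$ cannot be shrunk, but both the self-mapping and the contraction hold for every $T>0$ once $\|u_0\|_{L^r}$ is small, giving the global small-data solution in $C_b([0,\infty);L^r(\mathbb R^n))\cap\mathcal C_q([0,\infty);L^p(\mathbb R^n))$, which is (2).

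I expect the main obstacle to lie in this second step (and in its counterpart in the proof of the preceding time-weighted proposition): the Lipschitz estimate for $f(u)=|u|^\theta u$ forces an $L^\infty_x$-type control of the solution — equivalently, an interpolation between $L^r$ and $L^p$ with a time weight — and one must check that this produces only time singularities of the form $(t-\tau)^{\alpha-1}\tau^{-\kappa}$ with $\kappa$ small enough for the $\tau$-convolution to converge for $(q,p,r)\in\tilde\Pi$, i.e. that the defining constraints of $\tilde\Pi$, together with $p>\theta+1$ and $r\ge r_0$, are exactly what is needed to close the estimate with the favorable power $T^{\alpha-\alpha n\theta/(\beta r)}$; as in Proposition \ref{time space estimate of G(t)}, the two regimes $p<r(\theta+1)$ and $p\ge r(\theta+1)$ must be handled separately.
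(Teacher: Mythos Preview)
Your proposal is correct and follows essentially the same approach as the paper, which does not give an explicit proof of this theorem but simply writes ``Similarly, we have the following result'' after Theorem~5.1. Your plan---rerunning the contraction-mapping argument with $L^q(I;L^p)$ replaced by $\mathcal C_q(I;L^p)$, and invoking Corollary~\ref{time space estimate of S(t)}(2) together with the time-weighted Proposition in place of Corollary~\ref{time space estimate of S(t)}(1) and Proposition~\ref{time space estimate of G(t)}---is exactly the intended argument, and your discussion of the two regimes $p<r(\theta+1)$ and $p\ge r(\theta+1)$ mirrors the structure already present in those propositions.
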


\begin{rem} We cannot get the blow up criterion for this time fractional equation like what we can do for the integer case, since the equation lacks of the time-translation invariance property. In other words, we cannot use the iterate method to extend the existence time for our solution due to its loss of time-translation invariance. 	
\end{rem}

\appendix
\renewcommand*{\thesection}{\Alph{section}} 
\section{Properties of $H$-functions}

In this section, we give the asymptotic expansion as $H_{p,q}^{m,n}(z)$ for $z$ goes to infinity  or 0 and $\arg z$ lies in a certain sector.
Since in this paper, we mainly care about the results about $H^{2,1}_{3,2}(z)$, thus tailor the classical theorems to the special cases.

\begin{lem} \label{A1}
Assume that either $\Lambda\leq 0$ or $\Lambda>0, a^*>0$ with additional condition $|\arg z|< a^*\pi /2$. Then the asymptotic expansion of $H_{2,3}^{2,1}(z)$  near infinity is given by 
\[H_{2,3}^{2,1}(z)\sim \sum_{k=0}^{\infty}h_{k}z^{({a_1-1-k})/
{\alpha_1}}, \quad z \to \infty , \]
\end{lem}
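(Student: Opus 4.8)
The plan is to read the expansion directly off the Mellin--Barnes representation
\[
H_{2,3}^{2,1}(z)=\frac{1}{2\pi i}\int_{\mathcal L}\mathcal H_{2,3}^{2,1}(s)\,z^{-s}\,ds ,
\]
by translating the contour $\mathcal L$ to the right and summing the residues it crosses. For the parameters at hand, with $(m,n,p,q)=(2,1,2,3)$, the only poles lying to the right of $\mathcal L$ are those of the single numerator factor $\Gamma(1-a_1-\alpha_1 s)$, located at $s=a_{1k}=(1-a_1+k)/\alpha_1$ for $k=0,1,2,\dots$; generically these are simple (the finitely many parameter constellations for which such a pole collides with a pole of some $\Gamma(b_j+\beta_j s)$ produce the logarithmic corrections treated separately elsewhere in this appendix, cf.\ Lemma~\ref{A3}). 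So the skeleton is: fix $N$, push $\mathcal L$ past $a_{10},\dots,a_{1,N-1}$ to a vertical line $\Re s=\gamma_N\in(a_{1,N-1},a_{1,N})$, apply the residue theorem, and show the shifted integral is a remainder of strictly smaller order.

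The analytic core — the step I expect to be the real obstacle — is the justification of the contour shift: absolute convergence on the vertical lines together with the vanishing of the two horizontal connecting segments, uniformly for $z$ in a closed subsector $|\arg z|\le\vartheta<a^*\pi/2$. Writing $s=\gamma+i\tau$ and applying Stirling's formula to each Gamma factor, one obtains
\[
\bigl|\mathcal H_{2,3}^{2,1}(\gamma+i\tau)\bigr|\le C(\gamma)\,|\tau|^{\rho(\gamma)}\,e^{-\tfrac{\pi}{2}a^*|\tau|}\qquad(|\tau|\to\infty),
\]
where the coefficient $-\tfrac{\pi}{2}a^*$ in the exponent is exactly the net contribution of the numerator Gammas minus the denominator Gammas, by the very definition of $a^*$. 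Since $|z^{-s}|=|z|^{-\gamma}e^{\tau\arg z}$, the full integrand is bounded by $C|z|^{-\gamma}|\tau|^{\rho}e^{-(\tfrac{\pi}{2}a^*-|\arg z|)|\tau|}$, which is integrable in $\tau$ precisely when $|\arg z|<\tfrac{\pi}{2}a^*$, and the same bound with $\tau=\pm T$ fixed forces the horizontal pieces to $0$ as $T\to\infty$. (When $\Lambda\le0$ the requisite exponential gain is present without restriction on $\arg z$, via the companion estimate for the Mellin kernel; in either situation one may instead simply invoke the general asymptotic estimates of \cite{Kilbas}.) It then follows that
\[
H_{2,3}^{2,1}(z)=\sum_{k=0}^{N-1}h_k\,z^{-a_{1k}}+\frac{1}{2\pi i}\int_{\Re s=\gamma_N}\mathcal H_{2,3}^{2,1}(s)\,z^{-s}\,ds ,
\]
and the remainder is $O(|z|^{-\gamma_N})=o(|z|^{-a_{1,N-1}})$ uniformly in the subsector, which is the Poincar\'e-type asymptotic assertion; letting $N\to\infty$ gives the formal series.

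It remains to identify the coefficients. Because the translation is to the right, the residue theorem yields $h_k=-\operatorname{Res}_{s=a_{1k}}\mathcal H_{2,3}^{2,1}(s)=\lim_{s\to a_{1k}}\bigl[-(s-a_{1k})\mathcal H_{2,3}^{2,1}(s)\bigr]$; evaluating this limit is routine, since near $s=a_{1k}$ one has $1-a_1-\alpha_1 s=-k-\alpha_1(s-a_{1k})$ and $\Gamma(w)\sim(-1)^k/\bigl(k!\,(w+k)\bigr)$ at $w=-k$, so the factor $\Gamma(1-a_1-\alpha_1 s)$ contributes $(-1)^k/(k!\,\alpha_1)$ and the remaining Gamma quotient is evaluated at $s=a_{1k}=(1-a_1+k)/\alpha_1$. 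This produces exactly the coefficients $h_k$ in the form used in the proof of Lemma~\ref{asy}, and since $z^{-a_{1k}}=z^{(a_1-1-k)/\alpha_1}$ with the exponents strictly decreasing (in real part) as $k$ grows, this is precisely the claimed expansion. The only genuinely delicate point throughout is the uniform Stirling bound above; everything else is bookkeeping.
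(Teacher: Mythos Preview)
The paper does not actually prove this lemma: the appendix opens by saying it will ``tailor the classical theorems to the special cases'' and then simply states Lemmas~\ref{A1}--\ref{A4} without proof, implicitly referring to the standard asymptotic theory of $H$-functions in \cite{Kilbas}. So there is no ``paper's own proof'' to compare against.

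Your argument is the standard contour-shifting/residue proof that underlies those classical results, and it is correct in substance. The identification of the right-hand poles (only those of $\Gamma(1-a_1-\alpha_1 s)$, since $n=1$), the Stirling bound producing the exponential decay $e^{-\tfrac{\pi}{2}a^*|\tau|}$ on vertical lines, the resulting integrability condition $|\arg z|<\tfrac{\pi}{2}a^*$, and the residue computation are all correct. One small remark: your residue formula carries the factor $(-1)^k/(k!\,\alpha_1)$, which agrees with how $h_k$ is written inside the proof of Lemma~\ref{asy}; the displayed formula for $h_k$ immediately after the statement of Lemma~\ref{A1} in the paper is missing this $(-1)^k/k!$ factor, which appears to be a typographical slip there rather than an error on your part.
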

 and 
\[h_k=\frac{1}{\alpha_1}\frac{\Gamma(b_1-(a_1-1-k)\frac{\beta_1}{\alpha_1})\Gamma(b_2-(a_1-1-k)\frac{\beta_2}{\alpha_1})}{\Gamma(a_2-(a_1-1-k)\frac{\alpha_2}{\alpha_1})\Gamma(1-b_3+(a_1-1-k)\frac{\beta_3}{\alpha_1})}. \]

The next two lemmas give the asymptotic expansion of $H_{2,1}^{2,3}(z)$ near zero under the assumptions whether the poles of $\Gamma(b_1+\beta_1 s)$ and $\Gamma(b_2+\beta_2 s)$ coincide.
\begin{lem} \label{A2} Assume that the poles of $\Gamma(b_1+\beta_1 s)$ and $\Gamma(b_2+\beta_2 s)$  do not coincide, and  let $\Lambda>0$. Then the asymptotic expansion of $H_{2,1}^{2,3}(z)$ near zero is given by
\[H^{2,1}_{2,3}(z)\sim \sum_{j=1}^2\sum_{l=0}^{\infty} h_{jl}z^{(b_j+l)/\beta_j},\quad z\to 0, \]
where $h^*_{jl}$ is given by 
\[h^*_{1l}=\frac{(-1)^l}{l!\beta_1}\frac{\Gamma(b_2-[b_1+l]\frac{\beta_2}{\beta_1} )\Gamma(1-a_1+[b_1+l]\frac{\alpha _3}{\beta_1} )}{\Gamma(a_3-[b_1+l]\frac{\alpha_3}{\beta_1})\Gamma(1-b_3+[b_1+l]\frac{\beta_3}{\beta_1})}, \]
and \[h^*_{2l}=\frac{(-1)^l}{l!\beta_2}\frac{\Gamma(b_1-[b_2+l]\frac{\beta_1}{\beta_2} )\Gamma(1-a_1+[b_2+l]\frac{\alpha _3}{\beta_2} )}{\Gamma(a_3-[b_2+l]\frac{\alpha_3}{\beta_2})\Gamma(1-b_3+[b_2+l]\frac{\beta_3}{\beta_2})}. \]
In particular, the principal terms of this asymptotic have the form 
\[H^{2,1}_{2,3}(z)\sim \sum_{j=1}^2 h_{j^*}z^{b_j/\beta_j}+o(z^{b_j/\beta_j}),(z\to 0), \]
	where \[h^*_{1}=\frac{1}{\beta_1}\frac{\Gamma(b_2-b_1\frac{\beta_2}{\beta_1} )\Gamma(1-a_1+b_1\frac{\alpha _3}{\beta_1} )}{\Gamma(a_3-b_1\frac{\alpha_3}{\beta_1})\Gamma(1-b_3+b_1\frac{\beta_3}{\beta_1})}, \]
and \[h^*_{2}=\frac{1}{\beta_2}\frac{\Gamma(b_1-b_2\frac{\beta_1}{\beta_2} )\Gamma(1-a_1+b_2\frac{\alpha _3}{\beta_2} )}{\Gamma(a_3-b_2\frac{\alpha_3}{\beta_2})\Gamma(1-b_3+b_2\frac{\beta_3}{\beta_2})}. \]
\end{lem}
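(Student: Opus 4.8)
The plan is to extract the behavior near $z=0$ directly from the Mellin--Barnes representation
\[
H^{2,1}_{2,3}(z)=\frac{1}{2\pi i}\int_{\mathcal L}\mathcal H^{2,1}_{2,3}(s)\,z^{-s}\,ds,
\qquad
\mathcal H^{2,1}_{2,3}(s)=\frac{\Gamma(b_1+\beta_1 s)\Gamma(b_2+\beta_2 s)\Gamma(1-a_1-\alpha_1 s)}{\Gamma(a_2+\alpha_2 s)\Gamma(1-b_3-\beta_3 s)},
\]
by pushing the contour $\mathcal L$ to the left and collecting residues. Since $|z^{-s}|=|z|^{-\Re s}$ is small as $z\to 0$ when $\Re s<0$, each pole we cross contributes a term that is a positive power of $z$, and those exponents tend to $+\infty$, so the residue series is a genuine asymptotic expansion.

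Concretely, first I would fix $N$, choose $\gamma_N<0$ avoiding all the $b_{jl}=(-b_j-l)/\beta_j$, lying to the left of $b_{j0},\dots,b_{jN}$ (for $j=1,2$) but to the right of $b_{j,N+1},\dots$, and observe that the poles $a_{1k}=(1-a_1+k)/\alpha_1$ of $\Gamma(1-a_1-\alpha_1 s)$ remain to the right of the contour throughout the shift. The hypothesis that the pole sets of $\Gamma(b_1+\beta_1 s)$ and $\Gamma(b_2+\beta_2 s)$ are disjoint makes every $b_{jl}$ a simple pole of $\mathcal H^{2,1}_{2,3}$, so Cauchy's theorem gives
\[
H^{2,1}_{2,3}(z)=\sum_{j=1}^{2}\sum_{l=0}^{N}\operatorname*{Res}_{s=b_{jl}}\!\bigl[\mathcal H^{2,1}_{2,3}(s)\,z^{-s}\bigr]+\frac{1}{2\pi i}\int_{\Re s=\gamma_N}\mathcal H^{2,1}_{2,3}(s)\,z^{-s}\,ds .
\]

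Next I would evaluate the residues. Using $\operatorname{Res}_{s=b_{jl}}\Gamma(b_j+\beta_j s)=(-1)^l/(l!\,\beta_j)$, substituting $s=b_{jl}$ into the remaining Gamma factors, and writing $z^{-b_{jl}}=z^{(b_j+l)/\beta_j}$, one obtains exactly the coefficients $h_{jl}$ in the statement; taking $l=0$ in each sum yields the principal terms $h^*_1 z^{b_1/\beta_1}$ and $h^*_2 z^{b_2/\beta_2}$. It then remains to show the remainder integral is $o\bigl(z^{(b_j+l)/\beta_j}\bigr)$ for the terms retained, i.e.\ $O\bigl(|z|^{-\gamma_N}\bigr)$ with $-\gamma_N$ exceeding all the retained exponents. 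For this I would bound $|\mathcal H^{2,1}_{2,3}(\gamma_N+i\tau)|$ as $|\tau|\to\infty$ by Stirling's formula; here the condition $\Lambda=\beta_1+\beta_2+\beta_3-\alpha_1-\alpha_2>0$ is precisely what forces exponential decay in $|\tau|$ along vertical lines, which makes the leftward contour shift legitimate (the horizontal connecting segments vanish) and the remainder integral absolutely convergent and of the claimed order.

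I expect the main obstacle to be the uniformity in the remainder estimate rather than the residue bookkeeping: one must check that $\gamma_N$ can be sent to $-\infty$ as $N\to\infty$ while staying a fixed distance from the poles, and that the Stirling bound on $|\mathcal H^{2,1}_{2,3}|$ is strong enough to kill the horizontal segments joining $\mathcal L$ to $\{\Re s=\gamma_N\}$; all of this is standard once $\Lambda>0$, but it must be set up carefully. A secondary point is that the disjointness of the two pole sets is used twice --- once so that each $b_{jl}$ is a simple pole (so the $h_{jl}$ are well defined) and once so that distinct residues produce distinct powers of $z$, ruling out $\log z$ factors; the confluent situation is exactly what the following lemma treats.
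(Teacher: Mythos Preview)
Your proposal is correct and is precisely the standard Mellin--Barnes contour-shifting argument used to derive such expansions. Note, however, that the paper itself does not prove this lemma: it is stated in the appendix without proof as a classical result on $H$-functions, tailored from the general theory in the Kilbas--Saigo reference. So there is no ``paper's proof'' to compare against; your sketch simply supplies the argument the authors omitted, and it does so faithfully to how the result is established in the $H$-function literature.

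One small remark on your write-up: the integrand $\mathcal H^{2,1}_{2,3}(s)$ for the specific $H$-function in the paper has $p=2$, $q=3$, so there is no parameter $a_3$ or $\alpha_3$; the formulas for $h^*_{jl}$ in the statement already contain some index typos inherited from the paper (e.g.\ $\alpha_3$ where $\alpha_1$ is meant), and your general expression for $\mathcal H^{2,1}_{2,3}(s)$ is the right one to work from. This is cosmetic and does not affect the soundness of your argument.
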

As for the case that the poles of $\Gamma(b_1+\beta_1 s)$ and $\Gamma(b_2+\beta_2 s)$ coincide, we give the result in the following theorem.
\begin{lem} \label{A3}Assume that the poles of $\Gamma(b_1+\beta_1 s)$ and $\Gamma(b_2+\beta_2 s)$  do coincide, and  let either $\Lambda>0$ or $\Lambda<0, a^*>0$ with additional condition $|\arg z|<\frac{a^*\pi}{2}$. Then the principal terms of the asymptotic expansion of $H_{2,1}^{2,3}(z)$ near zero is given by
\[H^{2,1}_{2,3}(z)\sim H^{*}z^{\frac{b_1}{\beta_1}}[\ln z]^{N^*-1}+o(z^{\frac{b_1}{\beta_1}}[\ln z]^{N^*-1}), \quad z\to 0,\]
where $N^*$ is the order of the poles.	
\end{lem}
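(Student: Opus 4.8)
The plan is to run the classical Mellin--Barnes contour-shift argument for $H$-functions, in the degenerate form needed when poles collide. Starting from
\[
H^{2,1}_{2,3}(z)=\frac{1}{2\pi i}\int_{\mathcal L}\mathcal H^{2,1}_{2,3}(s)\,z^{-s}\,ds ,
\]
I would read off the $z\to 0$ behaviour by displacing the contour $\mathcal L$ to the left, across the poles of the two ``left'' numerator factors $\Gamma(b_1+\beta_1 s)$ and $\Gamma(b_2+\beta_2 s)$, which lie at $s=b_{jl}=(-b_j-l)/\beta_j$ and, crossing a pole of real part $\sigma<0$, contribute a term proportional to $z^{-s}=z^{-\sigma}$, small as $z\to 0$. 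By hypothesis the two pole sequences coincide, so after relabelling the rightmost one is $s_0=-b_1/\beta_1=-b_2/\beta_2$ and $\mathcal H^{2,1}_{2,3}$ has there a pole of order $N^*$ (here $N^*=2$, but we retain the general notation). I would fix a vertical line $\Re s=-\gamma$ with $\Re(b_1/\beta_1)<\gamma$ small enough that the line lies in the gap immediately to the left of $s_0$ and to the right of every deeper pole $b_{jl}$; moving $\mathcal L$ onto this line and applying the residue theorem gives
\[
H^{2,1}_{2,3}(z)=\operatorname*{Res}_{s=s_0}\!\bigl[\mathcal H^{2,1}_{2,3}(s)z^{-s}\bigr]+\frac{1}{2\pi i}\int_{\Re s=-\gamma}\mathcal H^{2,1}_{2,3}(s)z^{-s}\,ds .
\]

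Next I would compute the residue at $s_0$. Writing $\mathcal H^{2,1}_{2,3}(s)=\varphi(s)(s-s_0)^{-N^*}$ with $\varphi$ holomorphic near $s_0$ and $\varphi(s_0)\neq 0$ (which holds precisely because $N^*$ is the order of the pole), and using $\frac{d^{j}}{ds^{j}}z^{-s}=(-\ln z)^{j}z^{-s}$ together with the Leibniz rule,
\[
\operatorname*{Res}_{s=s_0}\!\bigl[\mathcal H^{2,1}_{2,3}(s)z^{-s}\bigr]=\frac{z^{-s_0}}{(N^*-1)!}\sum_{k=0}^{N^*-1}\binom{N^*-1}{k}\varphi^{(k)}(s_0)(-\ln z)^{N^*-1-k}.
\]
Since $z^{-s_0}=z^{b_1/\beta_1}$, the $k=0$ summand is exactly $H^{*}z^{b_1/\beta_1}[\ln z]^{N^*-1}$ with $H^{*}=\frac{(-1)^{N^*-1}}{(N^*-1)!}\varphi(s_0)$, while every summand with $k\ge 1$ carries the same power of $z$ but strictly fewer powers of $\ln z$, hence is $o\bigl(z^{b_1/\beta_1}[\ln z]^{N^*-1}\bigr)$ as $z\to 0$.

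It then remains to show that the remainder integral over $\Re s=-\gamma$ is also $o\bigl(z^{b_1/\beta_1}[\ln z]^{N^*-1}\bigr)$, and this is the technical heart. On that line $|z^{-s}|=z^{\gamma}\,|z^{-i\Im s}|$, and since $\gamma>\Re(b_1/\beta_1)$ the factor $z^{\gamma}$ already beats $z^{b_1/\beta_1}[\ln z]^{N^*-1}$; what must be controlled is convergence, i.e. the size of $\mathcal H^{2,1}_{2,3}(-\gamma+i\tau)$ as $|\tau|\to\infty$, which follows from Stirling's formula for the Gamma functions. When $\Lambda>0$ the kernel decays fast enough that the line integral converges absolutely for every value of $\arg z$ and is $O(z^{\gamma})$; when $\Lambda<0$ Stirling gives only a bound of the shape $|\tau|^{c}e^{-a^{*}\pi|\tau|/2}$, which has to be balanced against $|z^{-i\tau}|=e^{\tau\arg z}$, so the remainder converges and is $O(z^{\gamma})$ only under the sector restriction $|\arg z|<a^{*}\pi/2$ (and one needs $a^{*}>0$). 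Combining this bound with the residue computation above yields the announced principal-term asymptotics; this is the specialization of the general $H$-function expansion theorems of \cite{Kilbas} to $(m,n,p,q)=(2,1,2,3)$. The step I expect to be the main obstacle is exactly this uniform Stirling estimate: one must keep the line $\Re s=-\gamma$ close enough to $s_0$ that no extra poles are crossed yet far enough left that $z^{\gamma}$ is genuinely of smaller order than the leading residue, while simultaneously ensuring the vertical integral converges --- a balance that is delicate only in the borderline regime $\Lambda<0$.
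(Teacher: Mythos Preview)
The paper does not actually prove this lemma; it is stated in the appendix without proof, as a known result taken from \cite{Kilbas}. Your contour-shift argument is precisely the standard proof one finds in that reference, so your sketch is correct and matches the intended source.

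One small inaccuracy in your handling of the two hypotheses: the Stirling estimate on a vertical line $\Re s=-\gamma$ always gives $|\mathcal H^{2,1}_{2,3}(-\gamma+i\tau)|\lesssim |\tau|^{c}e^{-a^{*}\pi|\tau|/2}$, regardless of the sign of $\Lambda$; thus with a vertical contour the sector restriction $|\arg z|<a^{*}\pi/2$ is needed in either case. The reason no such restriction appears when $\Lambda>0$ is not that the vertical-line integral suddenly converges for all $\arg z$, but rather that one may then use the contour $\mathcal L_{-\infty}$ running horizontally to $-\infty$, along which the decay of the integrand is governed by $\Lambda$ rather than by $a^{*}$, and the residue series is then the entire expansion with no remainder integral to estimate. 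This does not affect the validity of your argument for the application in the body of the paper (where $a^{*}=2-\alpha>1$ and $|\arg z|=\pi/2$), only the explanation of why the hypothesis splits into two cases.
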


\begin{lem} \label{A4}Let $n=0$, $\Lambda>0, a^* >0$, and $\varepsilon$ be a constant such that $0<\varepsilon<\frac{\Lambda \pi}{2}  $, then for the $H$-function $H^{q,0}_{p,q}(z)$, the following asymptotic estimate holds at infinity:
\begin{align}
	H^{q,0}_{p,q}(z)=O\left (|z|^{[\Re(\mu)+1/2]/\Lambda}\exp \left
	\{\Lambda(\frac{|z|}{\delta} )^{1/\Lambda}\max \left(\cos\frac{a_1^*\pi +\arg z}{\Lambda},\cos\frac{a_1^*\pi -\arg z}{\Lambda}  \right)  \right\}\right),\quad z\to \infty 
\end{align}
uniformly on $|\arg z|\leq \frac{\Lambda \pi}{2}-\varepsilon$.
\end{lem}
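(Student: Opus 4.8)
The plan is to evaluate the defining Mellin--Barnes integral by the saddle point (steepest descent) method; this is the classical analysis of Barnes integrals going back to Braaksma, and the statement is the relevant asymptotic theorem of \cite{Kilbas} specialised to $m=q$, $n=0$. Since here $m=q$ and $n=0$, the Mellin symbol is a pure quotient of Gamma functions,
\[H^{q,0}_{p,q}(z)=\frac{1}{2\pi i}\int_{\mathcal L}\frac{\prod_{j=1}^q\Gamma(b_j+\beta_js)}{\prod_{i=1}^p\Gamma(a_i+\alpha_is)}\,z^{-s}\,ds,\qquad \mathcal L=\{\Re s=c\},\]
and the integral converges absolutely on a vertical line because, by Stirling, $|\mathcal H^{q,0}_{p,q}(c+i\tau)|\lesssim|\tau|^{\Lambda c+\Re\mu}e^{-a^*\pi|\tau|/2}$ with $a^*=\Lambda>0$.

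First I would record the global behaviour of the integrand: applying $\log\Gamma(w)=(w-\tfrac12)\log w-w+\tfrac12\log(2\pi)+O(|w|^{-1})$ to each factor and summing, one obtains, uniformly for $|\arg s|\le\pi-\eta$,
\[\mathcal H^{q,0}_{p,q}(s)=C\,\delta^{s}\,s^{\Lambda s+\mu}\,e^{-\Lambda s}\bigl(1+O(|s|^{-1})\bigr),\]
with $C$ an explicit constant depending only on the parameters. Hence the integrand is $e^{\Phi(s)}(1+O(|s|^{-1}))$ with $\Phi(s)=\Lambda s\log s+(\log\delta-\Lambda)s+\mu\log s-s\log z+\log C$, and from $\Phi'(s)=\Lambda\log s+\log\delta-\log z+O(|s|^{-1})$ the leading saddle point is $s_0=(z/\delta)^{1/\Lambda}$, the branch being fixed by $\arg s_0=(\arg z)/\Lambda$. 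This is precisely where the hypothesis $|\arg z|\le\frac{\Lambda\pi}{2}-\varepsilon$ enters: it forces $|\arg s_0|\le\frac{\pi}{2}-\frac{\varepsilon}{\Lambda}$, so $s_0$ lies well inside the sector of validity of the Stirling estimate and strictly to the right of all the poles $b_{jl}=-(b_j+l)/\beta_j$ of the integrand.

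Next I would carry out the steepest descent step. A direct computation gives $\Phi(s_0)=-\Lambda s_0+\mu\log s_0+\log C$ and $\Phi''(s_0)=\Lambda/s_0+O(|s_0|^{-2})$. Deforming $\mathcal L$ onto the path of steepest descent of $\Re\Phi$ through $s_0$ (which crosses no poles, uniformly in $z$ on the closed sector) and applying the standard Laplace estimate $\int e^{\Phi}\,ds\sim e^{\Phi(s_0)}\sqrt{2\pi/(-\Phi''(s_0))}$ yields
\[H^{q,0}_{p,q}(z)=O\!\left(|s_0|^{\Re\mu+\frac12}\,e^{\Re\Phi(s_0)}\right).\]
Since $|s_0|=(|z|/\delta)^{1/\Lambda}$ and $\Re(-\Lambda s_0)=-\Lambda(|z|/\delta)^{1/\Lambda}\cos\frac{\arg z}{\Lambda}=\Lambda(|z|/\delta)^{1/\Lambda}\cos\frac{a_1^*\pi+\arg z}{\Lambda}$ (here $a_1^*=\Lambda$, so $\cos\frac{a_1^*\pi+\arg z}{\Lambda}=-\cos\frac{\arg z}{\Lambda}$), this is exactly the asserted estimate; deforming the contour in the opposite sense produces the conjugate saddle $\bar s_0$ and the symmetric term $\cos\frac{a_1^*\pi-\arg z}{\Lambda}$, whence the maximum. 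All the estimates above are uniform on $|\arg z|\le\frac{\Lambda\pi}{2}-\varepsilon$, which gives the claimed uniformity.

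The step I expect to be the main obstacle is the rigorous contour deformation together with the uniform control of the error terms: one must verify that the vertical contour can be pushed onto a steepest descent path lying inside $|\arg s|\le\pi-\eta$, never meeting the poles $b_{jl}$, along which the tails of $\int e^{\Phi(s)}\,ds$ are negligible relative to the saddle contribution, with all constants independent of $\arg z$ on the closed sector. This is the technical heart of the Barnes-integral asymptotics; rather than reproduce it in full I would invoke the corresponding theorem of \cite{Kilbas}, supplying only the saddle computation above to show that it specialises to the stated form.
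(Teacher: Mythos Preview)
The paper does not actually prove this lemma: Lemma~\ref{A4} sits in the appendix as a quotation of a classical asymptotic theorem for $H$-functions, tailored from \cite{Kilbas} to the case $m=q$, $n=0$. There is therefore no ``paper's own proof'' to compare your attempt against.

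That said, your sketch is the correct classical argument. A few remarks. First, your identification $a^*=a_1^*=\Lambda$ in the case $m=q$, $n=0$ is right, and with it the two cosine terms in the statement collapse: $\cos\frac{a_1^*\pi\pm\arg z}{\Lambda}=\cos\bigl(\pi\pm\tfrac{\arg z}{\Lambda}\bigr)=-\cos\tfrac{\arg z}{\Lambda}$, so the ``max'' is redundant here and the estimate is genuinely one of exponential decay. Your explanation via a ``conjugate saddle'' is thus unnecessary in this special case, though it is the right picture in the general $H^{m,n}_{p,q}$ setting where $a_1^*\ne\Lambda$. Second, your Stirling expansion of $\mathcal H^{q,0}_{p,q}(s)$ and the resulting phase $\Phi(s)=\Lambda s\log s+(\log\delta-\Lambda)s+\mu\log s-s\log z+\mathrm{const}$, together with the saddle $s_0=(z/\delta)^{1/\Lambda}$ and the value $\Re\Phi(s_0)=-\Lambda(|z|/\delta)^{1/\Lambda}\cos\tfrac{\arg z}{\Lambda}+O(\log|z|)$, are all correct and lead exactly to the stated bound. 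Third, you are right that the only substantive gap is the uniform contour deformation and tail estimate; since you explicitly invoke \cite{Kilbas} for that step, your proposal is adequate as a proof sketch and matches what the paper itself relies on.
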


\section*{Acknowledgement}
This project was supported by the NSFC-RFBR Programme of China (No. 11611530677).
\section*{References}

\end{document}